\title[Hyperbolicity and Bifurcations for skew products]{Hyperbolicity and Bifurcations in 
holomorphic
families of polynomial skew products}
 \thanks{This research was partially supported by the ANR project LAMBDA, ANR-13-BS01-0002, and by a PEPS grant
 Jeune-s Chercheur-e-s
 awarded to the first author}
\address{ 
Universit\'e d'Orl\'eans, MAPMO \\
 UMR CNRS 7349\\
45067 Orl\'eans Cedex 2\\
  France }
  \email{matthieu.astorg$@$univ-orleans.fr}
\address{ 
CNRS, Univ. Lille, UMR 8524 - Laboratoire Paul Painlev\'e, F-59000 Lille, France}
  \email{fabrizio.bianchi$@$univ-lille.fr}
\newcommand{\Dd}{{\mathcal D}}
\newcommand{\Pp}{{\mathcal P}}
\newcommand{\Rr}{{\mathcal R}}
\def\eps{\varepsilon}
\def\im{\mathrm{Im}}
\def\id{\mathrm{Id}}
\def\lam{\lambda}
\def\supp{\mathrm{supp}}
\newcommand{\mb}{\mathbb}
\newcommand{\R}{\mb R}
\newcommand{\C}{\mb C}
\newcommand{\N}{\mb N}
\newcommand{\D}{\mb D}
\newcommand{\Z}{\mb Z}
\renewcommand{\P}{\mb P}
\renewcommand{\phi}{\varphi}
\renewcommand{\epsilon}{\varepsilon}
\renewcommand{\bar}{\overline}
\renewcommand{\tilde}{\widetilde}
\def\({\left(}
\def\){\right)}
\def\b#1{\bar{#1}}
\def\t#1{\tilde{#1}}
\newcommand{\pa}[1]{\left(#1\right)}
\renewcommand{\bra}[1]{\left[#1\right]}
\newcommand{\abs}[1]{\left|#1\right|}
\newcommand{\norm}[1]{\left\|#1\right\|}
\newtheorem{teo*}{Theorem}
\newtheorem{teo}{Theorem}[section]
\newtheorem{defi}[teo]{Definition}
\newtheorem{cor}[teo]{Corollary}
\newtheorem{lemma}[teo]{Lemma}
\newtheorem{prop}[teo]{Proposition}
\newtheorem{remark}[teo]{Remark}
\newtheorem{claim}[teo]{Claim}
\newtheorem{teointroletter}{Theorem}
\DeclareMathOperator{\Supp}{Supp}
\newcommand{\htbif}{\hat T_\mathrm{bif}}
\DeclareMathOperator{\Bif}{Bif}
\DeclareMathOperator{\Per}{Per}
\newcommand{\limn}{\lim_{n \rightarrow \infty}}
\renewcommand{\id}{\mathrm{Id}}
\newcommand{\mcal}{\mathcal{M}}
\newcommand{\bcal}{\mathcal{B}}
\newcommand{\ptwo}{\mathbb{P}^2}
\newcommand{\pk}{\mathbb{P}^k}
\newcommand{\card}{\mathrm{card\,}}
\newcommand{\jac}{\mathrm{Jac}}
\newcommand{\skp}{\mathbf{Sk}(p,2)}
\newcommand{\skpa}{\mathbf{Sk}(p,2,\alpha)}
\newcommand{\tbif}{T_\mathrm{bif}}
\newcommand{\tbifz}{T_{\mathrm{bif},z}}
\newcommand{\tbifzc}{T_{\mathrm{bif},z,c}}
\newcommand{\per}{\mathrm{Per}}
\renewcommand{\im}{\mathrm{Im}}
\newcommand{\re}{\mathrm{Re}}
\newcommand{\lloc}{L_{\mathrm{loc}}^1}
\newcommand{\ccal}{\mathcal{C}}
\newcommand{\dcal}{\mathcal{D}}
\newtheorem{lem}[teo]{Lemma}
\begin{document}

\maketitle

\begin{abstract}
We initiate a parametric study of
holomorphic
 families of polynomial skew products, i.e.,
polynomial endomorphisms
of $\C^2$ of the form $F(z,w)= \pa{p(z), q(z,w)}$
that extend to holomorphic 
endomorphisms of $\P^2(\C)$.
We prove that stability in the sense of \cite{bbd2015} preserves hyperbolicity within such families, 
and give a complete classification
of the hyperbolic 
components that are the analogue, in this setting, of the complement of the Mandelbrot set
for the family $z^2 +c$. We also precisely describe
the geometry of the bifurcation locus and current near the boundary of the parameter space.
One of our tools is an asymptotic
 equidistribution property
for
the bifurcation current. This is established in the
general setting of families of endomorphisms of $\P^k$, and is the first equidistribution
result of this kind for holomorphic dynamical systems in dimension larger than one.
\end{abstract}

\setcounter{secnumdepth}{3}
\setcounter{tocdepth}{1}
\tableofcontents

\section{Introduction and results}

A polynomial skew product in two complex variables is a polynomial endomorphism
of $\C^2$ of the form $F(z,w)= \pa{p(z), q(z,w)}$
that extends to an endomorphism of $\P^2=\P^2 (\C)$.
The dynamics of these maps was studied in detail in \cite{jonsson1999dynamics}.
Despite (and actually because of) their specific form,
they
have already provided examples
of dynamical phenomena
not displayed by one-dimensional polynomials,
see for instance \cite{astorg2014two,dujardin2016nonlaminar,dujardin2016non,taflin_blender}.
Their understanding has
  already proved
to be a necessary step in the study of endomorphisms of $\P^k$ in any dimension.
In this paper we address the question of understanding the dynamical stability of such maps. In order to do this,
let us first
introduce the framework for our work.

A \emph{holomorphic family} of endomorphisms of $\P^k$
is a holomorphic map $f\colon M\times \P^k \to M\times \P^k$
of the form $f(\lam,z)= (\lam, f_\lam(z))$. The complex manifold $M$ is the \emph{parameter
space}
and we require that all $f_\lam$ have the same  degree.
In dimension $k=1$,
the study of \emph{stability} and \emph{bifurcation} within such families
was initiated
by Man\'e-Sad-Sullivan \cite{mane1983dynamics} and Lyubich \cite{lyubich1983some} in the 80's. They proved that many natural
definitions
of stability are equivalent, allowing one to decompose 
the parameter space of any family of rational maps into a \emph{stability locus}
and a \emph{bifurcation locus}.
Moreover, their 
notion of stability   preserves hyperbolicity, in the
sense that if a parameter in a given component of the stability locus is hyperbolic, then all parameters
in the same component enjoy the same property. This fact
is of crucial importance 
for the theory.
In 2000, by means of the \emph{Lyapunov function} $L(f_\lam)$,
DeMarco \cite{demarco2001dynamics} constructed  a natural
\emph{bifurcation current} $\tbif := dd^c_{\lam} L(f_\lam)$ precisely supported on the bifurcation
locus. This allowed for the start
of a pluripotential study of the bifurcations of rational maps.

The theory by Man\'e-Sad-Sullivan, Lyubich and DeMarco
was recently extended to any dimension by Berteloot, Dupont, and the second author \cite{bbd2015,b_misiurewicz}.
Despite the quite precise understanding of the relation
between the various phenomena related to stability
and bifurcation (motion of the repelling cycles, Lyapunov function, Misiurewicz parameters),
apart from specific examples (\cite{bt_desboves})
or near special parameters (\cite{bb_hausdorff,dujardin2016non,taflin_blender,biebler2019lattes}),
we still miss a concrete and somehow general family
whose bifurcations
can be explicitly exhibited and studied, which may possibly play the role of the quadratic family
$z^2 +c$
for the higher dimensional theory.
 Moreover, it
is an open question
whether stability
preserves hyperbolicity in this context.

This paper aims at the precise understanding of the phenomena above, hyperbolicity \emph{in primis},
within families of polynomial skew products.

\medskip

\subsection{Main results}

While many of the results apply to more general families, we mainly
focus here on the family of \emph{quadratic skew products}, i.e.,
skew products of (algebraic) degree 2, that are in this context the analogue 
of the family  $z^2 +c$.
It is not difficult to see 
 that the
dynamical study of this family
can be reduced to that of the family
\begin{equation}\label{eq_family_intro}
\tag{*}
f_\lam: (z,w)\mapsto (z^2 + d, w^2 + az^2 + bz + c)
\end{equation}
with $d$ and $\lam:=(a,b,c)$ as (complex) parameters. Since bifurcations due to the parameter $d$
are of one-dimensional nature, we fix here $p(z):=z^2 +d$
and consider the parameter space $\skp:=\{f_\lam:  (a,b,c)\in \C^3\}$.

We are especially interested
in parameters near the boundary of this space, i.e., near the hyperplane
at infinity, that
we denote by $\P^2_\infty$.
The following is our first main result, giving a complete
description
of the bifurcation locus near $\P^2_\infty$
from both a topological and measure-theoretical point of view.
We denote by $J_p$  
the Julia set of $p$.
Given $z\in \C$, we set  $E_z := \set{[a,b,c]\colon a z^2 + bz +c=0}\subset\P^2_\infty$ 
and $E:= \cup_{z\in J_p} E_z$. 
An analogous result for quadratic rational maps is proved in \cite{berteloot2015geometry}.

\begin{teointroletter}\label{teo_new_bif}
The accumulation on $\P^2_\infty$
of the bifurcation locus of the family \eqref{eq_family_intro}
coincides with $E$. Moreover, the 
 bifurcation current $\tbif$ on $\C^3$ extends as a positive closed current $\htbif$ to
  $\P^3 = \C^3 \cup \P^2_\infty$
and
\[
\htbif \wedge [\P^2_{\infty}] = \int_{z\in J_p} [E_z]\mu_p.
\]
\end{teointroletter}

The proof of this result relies on several ingredients. The first is a decomposition for the bifurcation
current (and locus), valid in all the parameter space (and actually for any family of polynomial skew products), see Theorem
\ref{teo_new_skew_things}.

We
 then 
prove that special dynamically defined hypersurfaces
$\Per_n^v (\eta)$ \emph{equidistribute} towards the bifurcation current
$\tbif$
(and $\htbif$), 
see the
 next Section \ref{sec_intro_equi} for more details. Moreover, we can precisely
control the intersections of these
hypersurfaces 
with
 the hyperplane at infinity. We  thus obtain the convergences
\[
\frac{1}{d^{2n}}[\Per_n^v (\eta)] \to \htbif \quad \mbox{ and } \quad 
\frac{1}{d^{2n}}[\Per_n^v (\eta)] \wedge [\P^2_\infty] \to  \int_{z\in J_p} [E_z]\mu_p.
\]
Theorem \ref{teo_new_bif}
then reduces to proving that the
convergences above
imply that
\[
\frac{1}{d^{2n}}[\Per_n^v (\eta)] \wedge [\P^2_\infty] \to \htbif \wedge [\P^2_\infty],
\]
which is a problem of intersection of currents.
To do this, we 
 exploit 
the theory of \emph{horizontal positive closed currents} as developed by Dujardin \cite{dujardin2004henon},
 see also 
 \cite{dinh2006geometry}. This requires proving
  some uniform estimates on the directions at which the bifurcation locus approaches $\P^2_\infty$.

\medskip

Once the bifurcation locus near the hyperplane 
at infinity
is understood, we turn our attention to its complement, and in particular to the characterization of 
the \emph{hyperbolic components}.
Notice that, in order for those to exist,  $p$ must be
hyperbolic.

The stability of a polynomial skew product
as in \eqref{eq_family_intro} is determined by
the behaviour
of the critical points of the form $(z,0)$
with $z\in J_p$. For instance, as is the case for polynomials, when all these points
escape to infinity
by iteration, the map is hyperbolic. It is however not clear a priori 
that the presence of a hyperbolic map in a component
forces all the other maps in the same stability component to be hyperbolic.

In our next result not only do we solve this general
problem in the setting of polynomial skew products
(thus giving meaning to the expression \emph{hyperbolic components} here), 
but we also
give a complete classification of hyperbolic components that are analoguous to the 
so-called \emph{shift locus} from dimension 1.

More precisely, let $\Dd$ be the set of parameters for which all critical points in $J_p \times \C$
escape, and let $\Dd' \subset \Dd$ be the subset of parameters $\lam$ for which there is an arc 
joining $\lam$ to $\ptwo_\infty \backslash E$ inside $\Dd$.
Set
\[\mathcal{S}_p:=\Big\{s : \pi_0(\mathring{K_p}) \to \N : \sum_{U \in \pi_0(\mathring{K_p})} s(U) \leq 2 \Big\},\]
where $\pi_0(\mathring{K_p})$
denotes the set of bounded Fatou components of $p$.

\begin{teointroletter}\label{teo_new_hyp}
	Let $(f_\lam)_{\lam \in M}$ be a holomorphic family of polynomial skew products.
\begin{enumerate}
\item Any $f_\lam$ in a stable component containing a hyperbolic parameter is hyperbolic.
\item Assume that $M=\skp$. All connected components of $\Dd'$ are hyperbolic components, and there is a natural bijection between $\mathcal{S}_p$ and 
the connected components of $\Dd'$.
\end{enumerate}
\end{teointroletter}

The condition of the base polynomial
$p$
of being 
hyperbolic is actually not necessary, if we replace hyperbolicity with
\emph{vertical expansion}, see \cite{jonsson1999dynamics}
and Section \ref{s:vertical}. 
Our Theorem holds in this case too (see Section \ref{section_vertical_hyp}), 
and proves that stability preserves vertical expansion (Theorem \ref{teo_stab_hyp}), and
gives  a classification of vertical expanding component (Theorem \ref{teo_classification_components}).

The proof of the first item of Theorem  \ref{teo_new_hyp}
is based on a characterization of hyperbolicity (and vertical expansion)
due to Jonsson (see Theorem \ref{teo_jonsson_unique})
based on the (non) accumulation of the postcritical set
on the Julia set. 
Our task is to prove that stability preserves this equivalent notion.
The proof of the second item 
is topological in nature. Our main task is to exclude that a given hyperbolic component can accumulate two
distinct components of $\ptwo_\infty \backslash E$. 
 To prove this, we show that 
the combinatorial invariants
 $s \in \mathcal{S}_p$
  encode the isotopy class of the Julia set in $J_p \times \C$.

\subsection{Equidistribution towards the bifurcation current}\label{sec_intro_equi}
As mentioned above, 
one of our main tools in the proof of Theorem \ref{teo_new_bif} 
is 
an approximation result for the bifurcation current by means of dynamically defined 
hypersurfaces in the parameter space.
In dimension 1, the
idea of seeing $\tbif$ 
 as a limit of currents detecting dynamically interesting parameters
 goes back to
Levin \cite{levin1982bifurcation} (see also \cite{levin1990theory}), who proved that
 the centres of the hyperbolic components 
of the Mandelbrot set equidistribute the bifurcation current, which is supported on its boundary.
This result was later generalized in order to cover any family of polynomials
(and actually rational maps)
\cite{bassanelli2011distribution,okuyama2014equidistribution}, the distribution
of maps
 with a cycle of any given multiplier \cite{bassanelli2011distribution,buff2015quadratic,gauthier2016equidistribution,gauthier2017hyperbolic}
or with
preperiodic critical points \cite{dujardin2008distribution,favre2015distribution}.

In our situation, in the proof of Theorem \ref{teo_new_bif}
we need an equidistribution property towards $\tbif$
of
the parameters admitting a periodic point with \emph{vertical} multiplier $\eta$.
 Since the same techniques allow
to prove a general result valid for any family
of endomorphisms of $\P^k$, in any dimensions $k$,
we give a full proof of this
in the Appendix
\ref{section_equidistribution}. 
The following is 
also one of our main results: it
is the first equidistribution result in the parameter space for holomorphic dynamical systems in dimension larger than one.

\begin{teointroletter}\label{teo_new_equidistribution}
	Let $(f_\lam)_{\lam \in M}$ be the family 
	of all holomorphic endomorphisms of $\P^k$ of a given degree $d\geq 2$.
	For all $\eta \in \C$ outside of a polar subset, we have
	\[\frac{1}{d^{2n}} [\per_n(\eta)] \to \tbif,\]
	where
	$\per_n (\eta):=\{ \lambda \colon 
\exists z \in J_{f_\lam} \mbox{of exact period n for }  f_\lambda \mbox{ and such that } 	\jac_z f_\lam = \eta
	\}$.
\end{teointroletter}

The general strategy of the proof of Theorem \ref{teo_new_equidistribution}
follows the main line of
the one dimensional case and
is based of techniques and tools from pluripotential theory.
However, one of the  difficulties we have to face here
is the possible presence
of infinitely many non-repelling cycles for an endomorphism of $\P^k$ -- something
which is excluded for $k=1$ by a Theorem
due to Fatou. We thus need more
quantitative estimates on the number
of repelling cycles with small multiplier, 
which are related to 
the
 approximation formula for the
Lyapunov exponent valid in any dimension
established in \cite{berteloot2008normalization}.

\subsection{Organization of the paper}

After recalling the notions of vertical expansion, stability and bifurcation and fixing the
notations in Section \ref{section_prelim}, in Section \ref{section_skew_things}
we prove our approximation formulas for the vertical Lyapunov exponent. This motivates the study of \emph{vertical bifurcations}. 
Theorems \ref{teo_new_bif} and \ref{teo_new_hyp} are proved in Sections \ref{section_new_bif} and \ref{section_vertical_hyp}. 
Theorem \ref{teo_new_equidistribution} is proved in the appendix, together with its adapted
version
for families of polynomial skew product needed in the proof of Theorem \ref{teo_new_bif}.

\section{Preliminaries and notations}\label{section_prelim}

\subsection{Polynomial skew products}

A
polynomial skew product is an endomorphism of $\P^2$
 of the form
$f(z,w)= (p(z), q(z,w))$, for $p,q$ polynomials.
 The second coordinate will be also written as $q_z (w)$.
We shall denote by $z_j:= p^j (z)$ the points of 
the orbit
of  $z\in \C$ under the base polynomial $p$.
In this way, we can write
\begin{equation}\label{eq_for_qn}
f^n (z,w) = (p^n (z) , q_{z_{n-1}} \dots q_{z_1} \circ q_z (w)) =: (z_n,Q^n_z (w) ).
\end{equation}
The dynamics of 
polynomial skew products has been studied in detail by Jonsson \cite{jonsson1999dynamics}.
In particular, he proved that it is possible to associate to each $z \in J_p$ a \emph{vertical Julia set}
$J_z$, defined as the boundary of the set of points
 that have bounded orbit under the 
sequence $Q^n_{z}$. The map $z\mapsto J_z$ is lower semicontinuous. The following result describes the structure
of the \emph{Julia set} $J_f$ of $f$, i.e., the support of its measure of maximal entropy $\mu_f$.

\begin{teo}[Jonsson \cite{jonsson1999dynamics}]\label{teo_jonsson_julia}
Let $f$ be a polynomial skew product. Then
$J_f = \bar{\bigcup_{z\in J_p}\{z\}\times  J_z}.$
Moreover, $J_f$ is the closure of the repelling periodic points for $f$.
\end{teo}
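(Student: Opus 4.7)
The plan is to establish the two equalities $J(F) = \overline{\bigcup_{z\in J_p} \{z\}\times J_z}$ and $J(F) = \overline{\mathrm{Rep}(F)}$ by closing the triangle of inclusions
\[
\overline{\bigcup_{z\in J_p} \{z\}\times J_z}\;\subseteq\; \overline{\mathrm{Rep}(F)}\;\subseteq\; J(F) \;\subseteq\; \overline{\bigcup_{z\in J_p} \{z\}\times J_z}.
\]

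For the first inclusion, given a repelling periodic point $z_0\in J_p$ of $p$ of period $k$, the non-autonomous iteration over the orbit of $z_0$ collapses to the ordinary iteration of the polynomial return map $Q^k_{z_0}$ of degree $d^k$, whose classical Julia set equals $J_{z_0}$ and contains a dense set of repelling periodic points. For any such $(z_0,w_0)$, the differential $DF^{kn}(z_0,w_0)$ is lower triangular with diagonal entries $(p^k)'(z_0)^n$ and $((Q^k_{z_0})^n)'(w_0)$, both of modulus $>1$, so $(z_0,w_0)\in\mathrm{Rep}(F)$. Density of repelling periodic points of $p$ in $J_p$ combined with the lower semicontinuity of $z\mapsto J_z$ recorded in the excerpt then gives the inclusion.

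The second inclusion is the standard fact (due in general to Briend--Duval) that every repelling periodic point of an endomorphism of $\P^k$ lies in $\mathrm{supp}(\mu)=J(F)$, which is closed.

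The main obstacle is the closing inclusion $J(F)\subseteq \overline{\bigcup_{z\in J_p}\{z\}\times J_z}$. I would argue it in three steps.
\begin{enumerate}
\item Using the semi-conjugacy $\pi\circ F = p\circ\pi$ with $\pi(z,w)=z$, the measure $\pi_*\mu$ is $p$-invariant and, by the Abramov--Rokhlin formula for skew products, has maximal entropy $\log d$; Brolin's uniqueness theorem then gives $\pi_*\mu=\mu_p$, so $\mathrm{supp}(\mu)\subseteq\pi^{-1}(J_p)=J_p\times\C$.
\item Disintegrate $\mu = \int_{J_p}\nu_z\,d\mu_p(z)$ with $\mathrm{supp}(\nu_z)\subseteq\{z\}\times\C$. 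The invariances $F_*\mu=\mu$ and $F^*\mu=d^2\mu$ combined with the skew-product form translate, for $\mu_p$-a.e.\ $z$, into the fibrewise equivariances $(q_z)_*\nu_z=\nu_{p(z)}$ and $(q_z)^*\nu_{p(z)}=d\,\nu_z$.
\item These relations characterize $\nu_z$ uniquely as the fibrewise equilibrium measure $\mu_z=dd^c_w G_z(w)$, with support $J_z$; this is a non-autonomous analogue of Brolin's theorem for the sequence $\{q_{p^n(z)}\}_n$. Lower semicontinuity of $z\mapsto J_z$ then extends the identification from $\mu_p$-a.e.\ $z$ to all $z\in J_p$ modulo closure, yielding the inclusion.
\end{enumerate}

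The heart of the argument is the last step --- the non-autonomous Brolin-type uniqueness identifying $\nu_z$ with $\mu_z$ --- which relies crucially on the skew-product structure of $F$.
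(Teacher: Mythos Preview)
First, note that the paper does not actually prove this theorem: it is quoted in the preliminaries as a result of Jonsson, so there is no ``paper's own proof'' to compare against. That said, your outline has a genuine gap that is worth flagging precisely because the paper comments on it immediately after the statement.

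Your second inclusion, $\overline{\mathrm{Rep}(F)}\subseteq J(F)$, is \emph{not} a general fact for endomorphisms of $\P^k$, and it is not what Briend--Duval prove. Their equidistribution theorem gives that repelling periodic points are dense in $\mathrm{supp}(\mu)$, i.e.\ the \emph{opposite} inclusion $J(F)\subseteq\overline{\mathrm{Rep}(F)}$. The inclusion you invoke fails in general: there exist endomorphisms of $\P^2$ with repelling periodic points outside the Julia set (Hubbard--Papadopol, Forn{\ae}ss--Sibony), and the paper points this out explicitly right after the statement of the theorem (``Notice that the last assertion in this result is known not to hold for general endomorphisms of $\P^2$''). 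So as written, your triangle does not close.

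The repair is not hard, but it must use the skew-product structure rather than a general principle. If $(z_0,w_0)$ is repelling of period $k$ for $F$, the lower-triangular form of $DF^k$ forces $|(p^k)'(z_0)|>1$, so $z_0\in J_p$; then $w_0$ is a repelling fixed point of the return polynomial $Q^k_{z_0}$, hence $w_0\in J_{z_0}$. This gives $\mathrm{Rep}(F)\subseteq\bigcup_{z\in J_p}\{z\}\times J_z$ directly. Combined with your step~1 you obtain $\overline{\mathrm{Rep}(F)}=\overline{\bigcup_{z\in J_p}\{z\}\times J_z}$. To finish you then need $\overline{\bigcup J_z}\subseteq J(F)$, which your step~3 as stated does not provide (you only claim the inclusion $J(F)\subseteq\overline{\bigcup J_z}$). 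In fact the disintegration argument you sketch, once you identify $\nu_z=\mu_z$ for $\mu_p$-a.e.\ $z$, gives \emph{equality} $\mathrm{supp}(\mu)=\overline{\bigcup J_z}$: for any $(z_0,w_0)$ with $z_0\in J_p$ and $w_0\in J_{z_0}$, lower semicontinuity of $z\mapsto J_z$ ensures every neighbourhood of $(z_0,w_0)$ meets $J_z$ for $z$ in a set of positive $\mu_p$-measure, hence has positive $\mu$-mass. You should make this explicit. With these two corrections the argument goes through; the ``non-autonomous Brolin'' identification in step~3(3) remains the substantive point, and is indeed the content of Jonsson's structure theorem for $\mu$.
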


\subsection{Vertical expansion}\label{s:vertical}
Recall that an
endomorphism $f$ of $\P^k$ 
is \emph{hyperbolic} or \emph{uniformly expanding on the Julia set}
if
there exist
constants $c>0, K>1$ 
such that, for every $x\in J$ and $v\in T_x \P^k$, we have $\norm{Df_x^n (v)}_{\P^k}\geq c K^n$
(with respect for instance to the standard norm on $\P^k$).
In the case of polynomial skew products, this condition in particular
forces
the base polynomial $p$ to be hyperbolic.
Jonsson  thus 
introduced an adapted notion of hyperbolicity valid for any base polynomial $p$.
Given
 an invariant set $Z$ for $p$
set
\begin{enumerate}
\item $C_Z :=\cup_{z\in Z} \{z\}\times C_z$ for the \emph{critical set over $Z$},
\item $D_Z := \bar{ \cup_{\geq 1}f^n C_Z} =: \cup_{z\in Z} \{z\}\times D_{Z,z}$ for the \emph{postcritical set over $Z$}, and
\end{enumerate}
When dropping the index $Z$, we mean that we are considering $Z=J_p$.
\begin{defi}[Jonsson, \cite{jonsson1999dynamics}]
Let $f(z,w)=(p(z),q(z,w))$ be a  polynomial skew product 
and $Z\subset \C$ be such that $p(Z)\subset Z$. We say that  $f$
is \emph{vertically expanding over $Z$} if
there exist constants
$c>0$ and $K>1$
such that
$\left|\pa{Q^n_z}' (w)\right|\geq cK^n$ for every $z\in Z$, $w\in J_z$ and $n\geq 1$.
\end{defi}

For polynomials on $\C$, hyperbolicity is equivalent
to the fact that the closure of the postcritical set
is disjoint from the Julia set. In our situation, we have the following
analogous characterization.

\begin{teo}[Jonsson \cite{jonsson1999dynamics}]\label{teo_jonsson_unique}
Let $f(z,w)=(p(z),q(z,w))$
be a polynomial skew product. Then $f$ is vertically expanding over $Z$ if and only if
$D_Z \cap J_{Z} = \emptyset$, and 
 the following conditions are equivalent:
\begin{enumerate}
\item $f$ is hyperbolic;
\item $D\cap J = \emptyset$;
\item $p$ is hyperbolic, and $f$ is vertically expanding over $J_p$.
\end{enumerate}
\end{teo}

\subsection{Stability and bifurcations}

The definition and study of the notions
of stability and bifurcations for endomorphisms of projective spaces of
any dimension
is given in \cite{bbd2015,b_misiurewicz}, see also
 \cite{bb_warsaw}.
Since
  we will be mainly concerned with families of polynomial skew products in dimension 2,
we cite 
an adapted version 
in our setting.

\begin{teo}[\cite{bbd2015}]\label{teo_bbd_skew}
Let $(f_\lam)_{\lam\in M}$ be a holomorphic family of polynomial skew products of degree $d\geq 2$.  Then the following are equivalent:
\begin{enumerate}
\item the repelling cycles move holomorphically;
\item $dd^c_\lam L (\lam)\equiv 0$;
\item there are no \emph{Misiurewicz parameters}.
\end{enumerate}
\end{teo}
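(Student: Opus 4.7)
The plan is to deduce this statement from the general Theorem \ref{teo_bbd} applied to $F$ viewed as a holomorphic family of endomorphisms of $\P^2$, and then refine two of the four equivalent conditions using the skew-product structure.

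First, I would check that the family $F$ genuinely fits the framework of Theorem \ref{teo_bbd}: by the standing assumption, each $F_\lam(z,w)=(p_\lam(z),q_\lam(z,w))$ extends to a holomorphic endomorphism of $\P^2$ of algebraic degree $d\geq 2$, so we have a holomorphic family in the sense of the previous theorem with $k=2$. That theorem, applied as is, gives the equivalence between the asymptotic holomorphic motion of repelling cycles, the existence of an equilibrium lamination, the vanishing $dd^c L=0$, and the absence of Misiurewicz parameters. The task is therefore reduced to (a) strengthening the word ``asymptotically'' to ``all'' in item (1), and (b) rewriting $dd^c L$ as $dd^c(L_p+L_v)$ in item (3).

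For (b), I would use Jonsson's formula (Theorem \ref{teo_jonsson_lyapunov}), which identifies the two Lyapunov exponents of the equilibrium measure of a polynomial skew product as precisely $L_p$ and $L_v$. Since $L$ is by definition the sum of the Lyapunov exponents, we have the identity $L(\lam)=L_p(\lam)+L_v(\lam)$ pointwise on $M$, and the decomposition of $dd^c L$ follows immediately. No pshness of the individual summands is required, only their sum, which is already provided by the general theory.

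For (a), I would invoke the remark explicitly stated just after Theorem \ref{teo_bbd}: when $k=2$, the asymptotic motion of repelling cycles can be upgraded to the motion of \emph{all} repelling cycles contained in the Julia set. This is the result from \cite{bbd2015} (and \cite{berteloot2017cycles}) exploiting the two-dimensional setting to rule out the possibility that any repelling cycle collides with another or leaves the Julia set along a stable component; the price paid in higher dimension does not arise when $k=2$. This is the only genuinely delicate step in the proof, and the main obstacle, but it is a black-box citation rather than something to be reproved here. Combining this upgrade with Theorem \ref{teo_bbd} and the Jonsson decomposition of $L$ yields the full cycle of equivalences in the stated form.
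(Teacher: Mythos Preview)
Your proposal is essentially the paper's own justification: the paper does not give a separate proof of this theorem but remarks, immediately after stating it, that it follows from Theorem \ref{teo_bbd} together with (i) the $k=2$ upgrade from asymptotic motion to motion of all repelling cycles in the Julia set, and (ii) the decomposition $L=L_p+L_v$ from Theorem \ref{teo_jonsson_lyapunov}.

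There is one small missing step. Item (1) as stated says ``the repelling cycles move holomorphically'', with no qualifier restricting to cycles in the Julia set. The $k=2$ upgrade you invoke only gives the motion of all repelling cycles \emph{contained in the Julia set}; for general endomorphisms of $\P^2$ there can be repelling cycles outside the Julia set. To close this gap you need Theorem \ref{teo_jonsson_julia}, which asserts (as its last sentence) that for polynomial skew products the Julia set coincides with the closure of the repelling points. This rules out any repelling cycle outside $J(F)$, and only then does the $k=2$ improvement yield the unqualified statement in item (1). The paper makes exactly this point in the sentence following the theorem.
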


We say that a family is \emph{stable} if any of the conditions
 above is satisfied. 
The holomorphic motion of the repelling cycles is defined as in dimension 1 (see e.g. \cite{berteloot2013bifurcation,dujardin2011bifurcation},
or \cite[Definition 1.2]{bbd2015}
in this context). Notice that, is our setting, all repelling points are contained in the Julia set of $f_\lam$
 see Theorem \ref{teo_jonsson_julia}.
 We denote by $L(\lam)$
 the sum of the Lyapunov exponents of $f_\lam$, with respect to $\mu_\lam$, which 
 is a 
plurisubharmonic (psh for short)
 function on the parameter space. Thus, $dd^c_\lam L$
is a positive closed (1,1) current on $M$.
 We call it the \emph{bifurcation current} for the family. Its support
is the \emph{bifurcation locus}.
A \emph{Misiurewicz parameter} is
 a generalization
to any dimension
of a map with a critical point that is
(non-persistently) preperiodic to a repelling cycle. More precisely, they are defined as follows:

\begin{defi}\label{defi_misiurewicz}
Let $(f_\lam)_{\lam \in M}$
be a holomorphic family of endomorphisms of $\P^k$
and let $C_F$ be the critical
set of the map $F(\lam,z):= (\lam, f_\lam(z))$.
A point $\lambda_0$ of the parameter space $M$ is called a \emph{Misiurewicz parameter} if
there exist
a neighbourhood
$N_{\lambda_0} \subset M$ of $\lambda_0$ and
a holomorphic map $\sigma \colon N_{\lam_0}\to \C^k$
 such that:
 \begin{enumerate}
 \item\label{defi_in_julia_rep} for every $\lambda\in N_{\lam_0}$, $\sigma(\lambda)$ is a
 repelling periodic point;
 \item\label{defi_in_julia} $\sigma(\lam_0)$ is in the Julia set $J_{\lam_0}$ of $f_{\lam_0}$;
 \item\label{defi_capta_critico} there exists an $n_0$ such that $(\lambda_0, \sigma (\lambda_0))$ belongs
 to some component of $F^{n_0} (C_F)$;
 \item\label{defi_intersez} $\sigma(N_{\lambda_0})$ is not contained in a component of $f^{n_0} (C_F)$
 satisfying \ref{defi_capta_critico}.
 \end{enumerate}
\end{defi}

\subsection{Non autonomous bifurcations}\label{section_non_autonomous}

Given a
family of polynomial skew products of the form
$ f_\lam (z,w) = (p(z), q_{\lam,z} (w))$, $\lam \in M$,
for every $z\in J_p$ we can consider the 
non-autonomous iteration of $q_{\lam,p^j (z)}$
associated to
the fibre $z$.
The corresponding 
\emph{vertical Green function} 
is given by $G_{\lam,z} (w) = \lim_{n\to \infty} \frac{1}{n} \log^+ \big\|Q^n_{\lam,z} (w)\big\|$
and is psh.
The proof of the following results are completely analogous to the autonomous case.

\begin{prop}
Let $c(\lam)$ be a (marked) critical point of $q_{\lam,z}$.
The family $Q^n_{\lam,z} (c (\lam))$ is normal if and only if
 $dd^c_\lam G_{\lam} (z,c(\lam))\equiv 0$. 
\end{prop}

\begin{defi}\label{defi_bz}
We denote by $\bcal_{z,c} :=\set{\lam \colon G(\lam,z,c(\lam))=0}, \tbifzc:= dd^c_\lam G(\lam,z,c(\lam))$,
 and $\Bif_{z,c}:=  \Supp \tbifzc$ the \emph{boundedness locus}, the \emph{bifurcation current} and the \emph{bifurcation
locus} 
associated to a marked critical point $c$ in the fibre $z$. 
$\bcal_{z}, \Bif_{z}$ and $\tbifz$
are
the unions (or the sum) of the sets (currents) above, for $c$ critical point for $q_{z}$.
\end{defi}

\begin{lemma}\label{lemma_semicont}
For all $z$ and $c$ we  have $\Bif_{z,c}= \partial \bcal_{z,c}$ and $\Bif_{z}= \partial \bcal_{z}$.
For every compact $M'\Subset M$
the set $M' \cap \mathcal{B}_z$  (resp., $M' \cap \Bif_z$)
varies upper (resp. lower) 
semicontinuously with $z$.
\end{lemma}

\subsection{Quadratic skew products}\label{section_quadratic}

We now specialize to
 quadratic polynomial skew products. The general form is 
\[
(p(z), Az^2 + Bzw + Cw^2 + Dz + Ew + F),\]
where $p$ is a quadratic polynomial. Notice that we necessarily
have $C\neq 0$ in order to extend the map above to an endomorphism to $\P^2(\C)$.

\begin{lemma}\label{lemma_simplify_quadratic}
Every quadratic skew product with $p(z)$
as first component
is affinely conjugated
to
a map of the form
$(z,w) \mapsto (z^2+d, w^2 + az^2 + bz +c)$.
\end{lemma}

We can thus
 consider the space $\skp$ of quadratic skew products over the 
base $p$ as identified with $\C^3$.
We will also work with the compactification of 
$\skp$ as $\mathbb P^3$, and denote by $\ptwo_\infty$ the hyperplane at infinity.

Notice that, for all maps in $\skp$,
the fiber at any $z$ contains a unique critical point for $q_z$, $w=0$.
By the results of the previous section, in order to understand
the stability of the family we then need to study
the Green function at the points of the form $(z,0)$, with $z\in J_p$. This leads
to the following definition.

\begin{defi}
	We partition the parameter space $\skp$
 as follows:
	\begin{enumerate}
		\item $\ccal := \{ \lambda \in \C^3 : \forall z \in J_p, G(z,0)=0 \} = \bigcap_{z\in J_p} \mathcal{B}_z$;
		\item $\dcal := \{ \lambda \in \C^3 : \forall z \in J_p, G(z,0)>0 \} = \bigcap_{z\in J_p} \mathcal{B}_z^c$;
		\item $\mcal := \C^3 \backslash (\ccal \cup \dcal)$.
	\end{enumerate}
\end{defi}

In the case where $J_p$ is connected, $\ccal$ is (in restriction to our family)
what in \cite{jonsson1999dynamics} is called the \emph{connectedness locus}, meaning the set of parameters such that
$J_p$ is connected, and  $J_z$ is connected
for all $z \in J_p$.
$\ccal$ is closed and $\dcal$ is open. 
It follows from \cite{jonsson1999dynamics} that $\dcal$ is in fact a union of vertically expanding components (hyperbolic, if $p$ is hyperbolic).
As we will see below,
$\ccal$ is bounded  (Corollary \ref{coro:3bzbounded})
but $\mcal$ and $\dcal$ are not (and actually contain
unbounded hyperbolic components, see Sections \ref{section_hyp_D} and \ref{section_hyp_other}).

We further define $\Dd'$ as the subset of $\Dd$ with access to infinity:
\begin{equation}
\Dd':=\{\lam \in \Dd : \text{ there exists a path joining }\lam \text{ to } \ptwo_\infty \backslash E \text{ in }\Dd \}.
\end{equation}
Note that connected components of $\Dd'$ are also connected components of $\Dd$.

\begin{remark}
 Dujardin \cite{dujardin2016non} and Taflin \cite{taflin_blender}  recently proved
that some polynomial skew products are in the interior of the bifurcation locus, 
a phenomenon that contrasts with the one-variable situation. 
Such behaviour can only occur in $\mcal$: indeed, parameters in $\dcal$ are vertically expanding hence in the 
stability locus. As for parameters in $\ccal$, 
any connected component of $\mathring{ \ccal}$
is a stable component. 
\end{remark}

For technical reasons, 
we will also need to consider  some
1-codimension subfamilies of
$\skp$ defined as follows.
Given any $\alpha = (\alpha_1, \alpha_2, \alpha,_3)\in \C^3$, we denote by $\skpa$ 
the family corresponding to the 
  hyperplane
$\alpha_1 a + \alpha_2 b + \alpha_3 c =0$
in the parameter space $(a,b,c)$.
We denote by $\Bif^\alpha$ and $\tbif^\alpha$ the bifurcation locus and current in the family
$\skpa$.
The definition of $\bcal_z, \Bif_z, \tbifz$ can also
easily be adapted for $\skpa$, and
we can define
 $\bcal_z^\alpha, \Bif_z^\alpha,{\tbifz^\alpha}$ in an analogous way as
 in Definition \ref{defi_bz}.
We denote by $E^\alpha$ the hyperplane $\set{[a,b,c]\colon\alpha_1 a + \alpha_2 b + \alpha_3 c =0}\subset \P^2_\infty$.

\section{Lyapunov exponents and fiber-wise bifurcations}\label{section_skew_things}

In this Section we establish decomposition formulas for the bifurcation current and locus.
 Both will 
be used in the proof of Theorem \ref{teo_new_bif}, and they
motivate the classification in Theorem \ref{teo_new_hyp}.

\subsection{The vertical bifurcation}
Consider 
 a family of polynomial 
 skew products of $\C^2$
  of the form $f_\lam (z,w)= (p_\lam(z), q_{\lam,z} (w))$.
By
\cite{jonsson1999dynamics},
the two Lyapunov exponents of $f_\lam$ with respect to its maximal entropy measure 
are equal to
\begin{equation}\label{eq_lyapunov}
L_p(\lam) 
 =\log d + \sum_{z\in C_{p_\lam}} G_{p_\lam} (z) \quad \mbox{ and }\quad
L_v(\lam)
  = \log d + \int 
  \Big(
  \sum_{w\in C_{\lam,z}} G_\lam (z,w)
\Big)  
   \mu_{p_\lam},
\end{equation}
where $C_{p_\lam}$ and $\mu_{p_\lam}$ are the critical set and the equilibrium measure of $p_\lam$
and $C_{\lam, z}$ is the critical set of $q_{\lam,z}$.

By
\cite{pham,ds_cime}
 the sum $L(\lam)= L_p (\lam)+L_v (\lam)$
is a 
 psh function.
In our situation,  
we are interested in the two functions 
$L_p$
 and $L_v$ separately.
 The first
  is
   psh, since it is the Lyapunov function of
a polynomial family on $\C$. The following result ensures that $L_v$ enjoys the same property.

\begin{prop}\label{prop_vertical_bif}
Let $(f_\lam)_{\lam \in M}$
be a  holomorphic family of polynomial skew product. The map $\lam\mapsto L_v (\lam)$ is psh. In particular, the current
$\tbif^v := dd^c L_v = \tbif - \tbif ({p_\lam})$ is positive and closed.
\end{prop}

The Proposition above is a direct consequence of the (pointwise and $L^1_{loc}$)
convergence of the first sequence 
in the following Lemma. 
We denote by $\Rr_N (\lam)\subset \Pp_N (\lam)$ the  sets
\[
\Pp_N (\lam)  := \set{z\in \C\colon p_\lam^n(z)=z}
\quad \mbox{ and } \quad
\Rr_N (\lam) := \set{z\in \C\colon p_\lam^n(z)=z, \abs{(p_\lam^n)'(z)}>1}.
\]

\begin{lemma}\label{lemma_formula_Lv}
 We have
 \[
 L_v (\lam) = \lim_{N \to \infty} \frac{1}{d^{N} }\sum_{z\in \Pp_N ( \lam)} \sum_{w\in C_{\lam,z}} G_\lam (z,w)
 = \lim_{N \to \infty} \frac{1}{ d^{N} }\sum_{z\in \Rr_N (\lam)} \sum_{w\in C_{\lam,z}} G_\lam (z,w)
 \]
 where the convergence is pointwise and in $L^1_{loc} (M)$. 
\end{lemma}

\begin{proof}
The pointwise convergence
of both sequences
  follows from the equidistribution of the periodic  (or repelling periodic)
 points towards the equilibrium measure $\mu_{p_\lam}$ for the polynomial $p_\lam$, 
 and the continuity of the Green function.
 
 The continuity of $G$ and the fact that $\card \Pp_N (\lam), \card \Rr_N (\lam) \leq d^N$
  for every $N\in \N$ also imply that
  both sequences are locally uniformly bounded.
It thus suffices to prove that the first sequence consists of psh functions to get that
there exist
$L^1_{loc}$ limits.
 By the previous part the only possible limit will then be
$L_v (\lam)$, proving the statement.
In order to do so, since $G$ is psh, it suffices to notice that the set $C_N$ given by
$C_N := \set{ (\lam, z, w) \colon z\in \Pp_N (\lam), w\in C_{\lam,z}}$
is an analytic subset of $M\times \C^2$. The assertion follows.
\end{proof}

In view of Proposition \ref{prop_vertical_bif}, 
and since $\Bif (p) \equiv \Supp dd^c L_p$, we can 
decompose
the bifurcation locus
$\Bif (F)$ as a union (non necessarily disjoint)
\[
\Bif (F) = \Bif (p) \cup \Bif (q)
\]
where we denoted $\Bif (q):= \Supp \tbif^v = \Supp dd^c L_v$.
We will call such bifurcations \emph{vertical}.
Our next goal
 consists in getting a better understanding of the set $\Bif (q)\setminus \Bif (p)$.

\subsection{A decomposition for the bifurcation current and locus}

\begin{teo}\label{teo_new_skew_things}
Let $f_\lam(z,w) = (p(z),q_\lam(z,w))$, $\lam \in M$, be a holomorphic 
family  of polynomial skew products of degree $d$.
Then
\[
\tbif = \int_{z\in J_p} \tbifz \mu_{p} 
\quad \mbox{ and } \quad 
\Bif (F)= \bar{ \bigcup_{z\in J_p} \Bif_z }.
\]
\end{teo}

\begin{proof}
The first formula follows from
the expression for $L_v$
in \eqref{eq_lyapunov}
The inclusion
$\subseteq$
 in the second formula is an immediate consequence of the first formula.

By 
 the lower semi continuity of $z\mapsto \Bif_{z}$, in order to  prove the
 reversed inclusion
 it is enough to show that,
for every $N$
and $z\in \Rr_N$, we have
$\Bif (F) \supseteq  \Bif (Q^N_{\lam,z}),$ 
 where $Q^N_{\lam,z}$ denotes the 1-dimensional family $(\lam,w) \mapsto (\lam, Q^N_{\lam,z} (w))$.
In order to prove this, given any such $N$ 
 and $z$,
   let us consider a parameter $\lam_0 \in \Bif(Q^N_{\lam,z})$.
 There exists a parameter $\lam_1$ close to $\lam_0$
 which is Misiurewicz for the family $Q^N_{\lam,z}$,
  i.e., there exist a critical point $c$ for $Q^N_{\lam_1,z}$,
 a number $N_0\geq1$ and a $N_1$-periodic repelling point $w$
 for $Q^N_{\lam_1,z}$ 
 such that $Q^{NN_0}_{\lam,z} (c)= w$, 
 and the relation $Q^{NN_0}_{\lam,z} (c(\lam))= w(\lam)$
 does not persistently
 hold for every $\lam$ near $\lam_1$. 
Here  $c(\lam)$ and $w(\lam)$ are the local holomorphic motions of $c$ and $w$ as a critical point and
 as a $N_1$-periodic repelling point, respectively
 (we assume for simplicity that we can mark the critical point $c$, the argument being similar otherwise).
 The point $(z,c)$ 
 is in particular critical also for $f_{\lam_1}$, 
 and the point $(z,w)$ is $NN_1$-periodic and repelling for $f_{\lam_1}$.
 So, it is enough to check that
 there does not exist any holomorphic map 
 $\lam \to (z(\lam),\t c(\lam)) \in C(F_\lam)$ such that $(z(\lam_1),\t c(\lam_1)) = (z, c)$
 and
 the relation $F^{NN_0}_\lam \pa{ z(\lam),\t c(\lam)}= \pa{z,w(\lam)}$ holds persistently
 in a neighbourhood of $\lam_1$.
 First of all, by the finiteness of the $p^{N_0}$-preimages of $z$, 
 up to restricting ourselves to a small neighbourhood of this point,
 we can assume that every
 $F^{N_0}$-preimage of $(z,w(\lam))$ belongs to the fiber of $z$, too. 
 In this way, any persistent critical relation 
 must happen in the fibers of $z$.
 This in excluded, since the parameter is Misiurewicz for the restricted family.
\end{proof}

\subsection{Approximations for the bifurcation current: periodic fibres and preimages}

We  characterize here the Lyapunov exponents
of a skew product map by means of the Green functions of the return maps
of the periodic vertical fibers.
This allows us to approximate
the bifurcation current by means of
the bifurcation currents
of these return maps.
We  fix  the base polynomial
$p$ for simplicity, but the results are generalizable 
to families
where also $p$ is allowed to depend from a parameter, see also \cite{dupont2018dynamics}.

\begin{prop}
Let $f_\lam (z,w)=(p(z),q_\lam(z,w)), \lam \in M$ be a family of polynomial skew products. 
Then
 \begin{equation}\label{eq_Lv_lim_green}
 L_v (\lam)
 = \lim_{N\to \infty} \frac{1}{N d^{N} }\sum_{z\in \Rr_N} \sum_{w\in C \pa{Q^N_{\lam,z}}} G_{Q^N_{\lam,z}} (w)
 \end{equation}
 where the convergence is pointwise and in $L^1_{loc} (M)$. In particular,
 \[
\tbif^v =  \lim_{N\to \infty} \frac{1}{N d^{N} }\sum_{z\in \Rr} \tbif (Q^N_{\lam,z}).
 \]
\end{prop}

\begin{proof}
By Lemma \ref{lemma_formula_Lv} 
we only have to prove that, for any $\lam,N$ and $z \in \Rr_N$,
\begin{equation}\label{eq_1N}
 \sum_{w\in C_{\lam,z}} G_\lam (z,w)=\frac{1}{N}
 \sum_{w\in C \pa{Q^N_{\lam,z}}} G_{Q^N_{\lam,z}} (w).
\end{equation}
 First
  notice that, for every $\lam, N,  z \in \Rr_N$
 and $w\in \C$,
 we have
  $G_\lam (z, w) = G_{Q^N_{\lam,z}} (w)$.
 So, the
 left hand side of \eqref{eq_1N} is equal to
 $
  \sum_{w\in C_{\lam,z}} G_{Q^N_{\lam,z}} (w).
 $
 We are thus left with checking that, for a given skew product $f(z,w)= (p(z), q_z(w))$, for every $N$-periodic point $z$ of $p$, we have
 \[
 \sum_{j=0}^{N-1} \sum_{w\in C_{p^j (z)}} G_{Q^N_{p^j (z)}} (w) =
 \frac{1}{N} \sum_{j=0}^{N-1} \sum_{w\in C \pa{Q^N_{p^j(z)}}} G_{Q^N_{p^j (z)}} (w). 
 \]
 Let us first describe the critical set of $Q^N_{p^j (z)}$, that we denote by $C^j$. Since $Q^N_{p^j (z)}$ is by definition
 equal to $q_{p^{j-1} (z)} \circ \dots \circ q_{z} \circ q_{p^{N-1}(z)}\circ \dots \circ q_{p^{j+1}(z)}\circ q_{p^j (z)}$, we have
 $C^j = C\pa{Q^N_{p^j (z)}} = \bigcup_{i=0}^{N-1} C^j_{i}$, where
 \[
 \begin{aligned}
 C_0^j & = C(q_{p^j (z)})\\ 
 C_1^j & = q_{p^j (z)}^{-1} C(q_{p^{j+1}(z)})\\
 C_2^j & = q_{p^j (z)}^{-1} q_{p^{j+1}(z)}^{-1} C(q_{p^{j+2}(z)}) = \bra{Q^2_{p^j (z)}}^{-1} C(q_{p^{j+2}(z)})\\
 \vdots &\\
 C_{N-1}^j &= q_{p^j (z)}^{-1} q_{p^{j+1}(z)}^{-1} \dots q_{p^{j-2}(z)} C(q_{p^{j-1} (z)}) = \bra{Q^{N-1}_{p^j (z)}}^{-1}C\pa{q_{p^{j-1}(z)}}
 \end{aligned}
 \]
 (each term $C^k_l$ is to be thought of as a subset of the fibre 
 over $p^k (z)$ which is the preimage of the critical set of
 $q_{p^{k+l (\mod N)} (z)}$ by $Q^l_{p^k (z)}$).
 So, it suffices to prove that, for any $0\leq j,i\leq N-1$, we have
 \[\sum_{w\in C^j_0 = C \pa{q_{p^j (z)}}} G_{Q^N_{p^j (z)}} (w) =
 \sum_{w \in C^i_{j-i} = \bra{Q^{j-i}_{p^{i}(z)}}^{-1} \pa{C \pa{q_{p^j (z)}}}} G_{Q^N_{p^i(z)}} (w),\]
where $j-i$ has to be taken modulo $N$. 
 But this follows from the identity
 $G(f (\cdot))= d G(\cdot)$.
Indeed, for points $(p^l (z),w)$ in the fibre $\set{p^l(z)}\times \C$, we have $G_f(p^l (z),w)= G_{Q^N_{p^l (z)}} (w)$.
Moreover $\set{p^i (z)} \times C^i_{j-1}$ contains exactly $d^{j-i}$ preimages (counting multiplicities)
 by $F^{j-i}$ of any point in
$\set{p^j(z)}\times C_0^j$ (out of the total $d^{2(j-i)}$, since we do not consider preimages other than the ones contained in the fiber over $p^j (z)$).
So, for each $w\in C^j_0$ in the left sum, with value $G_{Q^N_{p^j (z)}} (w)$ , there are $d^{j-i}$ preimages $w_1 , \dots, w_{d^{j-i}}$
in the right sum, each one with value $G_{Q^N_{p^i(z)}} (w_l) = G_{Q^N_{p^j (z)}} (w) / d^{j-i}$.
 The assertion follows.
\end{proof}

By similar arguments, by exploiting
the equidistribution of preimages
of generic points, we can establish the following further approximation of the bifurcation current and locus.

\begin{prop}
Let $f_\lam(z,w)=(p(z), q_\lam(z,w)), \lam \in M,$ be a holomorphic family of
polynomial
skew products of degree $d\geq 2$.
Let $z \in J_p$.
\[
\tbif = \lim_{N\to \infty} \frac{1}{d^N} \sum_{y\colon p^N (y)=z} T_{\mathrm{bif},y} \mbox{ and }
\Bif (M) = \bar{ \bigcup_{N \in \N} \bigcup_{y \colon p^N (y)=z} \Bif{y}}.
\]
\end{prop}

\section{Bifurcations near $\P^2_\infty$ and Theorem \ref{teo_new_bif}}\label{section_new_bif}

\subsection{Accumulation at $\P^2_\infty$}
In this Section we prove the first part of Theorem \ref{teo_new_bif}. Since we will need it in the next Section
\ref{section_current_infinity},
we  actually
characterize the accumulation of the bifurcation locus also for generic subfamilies of the form $\skpa$, see Section \ref{section_quadratic}.
Namely, we will consider in the following families 
$\skpa$ corresponding to $\alpha \in \C^3$ satisfying
\begin{equation}\label{eq_condition_subfamily}
[\alpha_1, \alpha_2, \alpha_3]\neq [z^2,z,1] \mbox{ for all } z \in J_p.
\end{equation}

Condition \eqref{eq_condition_subfamily}
 means that the line at infinity $E^\alpha$
of the family $\skpa$
 is different 
from any line $E_z$ 
corresponding to any $z\in J_p$. Notice in particular that, for every $[a,b,c] \in \P^2_\infty$,
among all the families such that $[a,b,c]\in E^\alpha$, at most two 
do not satisfy condition \eqref{eq_condition_subfamily}.

\begin{teo}\label{teo_acc_infinity_bif}
In the family $\skp$ (resp., $\skpa$
for any $\alpha$ satisfying in \eqref{eq_condition_subfamily}),
 the following hold.
\begin{enumerate}
		\item
	For every $z \in J_p$,
		the cluster set at infinity of 
		$\mathcal{B}_z$ and $\Bif_z$ (resp., $\bcal_z^\alpha$ and $\Bif_z^\alpha$) is exactly
		$E_z$ (resp., $E_z \cap E^\alpha$).
		\item
		The cluster set at infinity of $\mathrm{Bif}$ (resp., $\Bif^\alpha$)
		is exactly
		$E$ (resp., $E\cap E^\alpha$).
	\end{enumerate}
\end{teo}

It will be useful to fix the following notations.
For $\lambda:=(a,b,c) \in \C^3$ and $p$ a monic quadratic polynomial, we set
$$f_\lambda(z,w)=(p(z),w^2+az^2+bz+c).$$
For $\lambda \in \C^3$, recall that  
$q_{\lambda,z}(0)=az^2+bz+c$ and set
$R(f_\lambda):=\sup_{z \in J_p} |az^2+bz+c|$.
We then have the following elementary lemma.

\begin{lemma}\label{lem:adherenceinclus}
	For all $\lam$
	with $\|\lam\|$ sufficiently large, for all $z_0 \in J_p$,  if  $G_\lambda(z_0,0)=0$
	then
	$|q_{\lambda,z_0}(0)| \leq 2 \sqrt{R(f_\lambda)}$. 
	In particular, the cluster set of $\mathcal{B}_{z_0}$
	in $\P^2_{\infty}$
	is contained in
	$E_{z_0}$.
\end{lemma}

\begin{proof}
	For $n \in \N$, set $z_n:=p^n(z_0)$ and $\rho_n:=az_n^2+bz_n+c$.
	Let $w_n:=Q_{z_0}^{n+1}(0)$: then $w_0=\rho_0$
	 and $w_{n+1}=w_n^2+\rho_n$.
	Therefore we have $|w_{n+1}| \geq |w_n|^2 - R(f_\lambda)$,  
	and since by assumption $(w_n)_{n \in \N}$ is bounded, 
	if $R(f_\lambda)\geq 1$ (which is true for $\|\lam\|$ sufficiently large)
	we must have  
	$|w_n| \leq 2 \sqrt{R(f_\lambda)}$
	for all $n \in \N$. The first assertion follows by taking $n=0$.
	
	Take now $(a_0,b_0,c_0)$ such that
	$[a_0,b_0,c_0]\notin E_{z_0}$. In particular, we have  $|a_0 z^2_0+ b_0z_0 + c_0 |>2 \eps_0$
	for some positive $\eps_0$, which implies that $|a z^2_0+ bz_0 + c |> \eps_0$ for all
	$(a,b,c)$ sufficiently close to $(a_0,b_0,c_0)$.
	It follows that for all such $(a,b,c)$ 
	both $q_{z_0,t (a,b,c)} (0)$
	and $R(f_{t(a,b,c)})$
	 grow linearly in $|t|$ as $|t|\to\infty$.
	By the first part of the statement, this implies that $G_{t(a,b,c)}(z_0)>0$
	for all $(a,b,c)$ sufficiently close to $(a_0,b_0,c_0)$ and $t$ with $|t|$ large enough. The assertion follows.
\end{proof}

\begin{proof}[Proof of Theorem \ref{teo_acc_infinity_bif}]
		By Lemma \ref{lem:adherenceinclus}, the cluster set of $\mathcal{B}_z$ is included 
		in 
$E_z$.		We thus prove the opposite inclusion.
		We first consider $z$ such
		that $z=p^n(z)$. Since
$E_z$		
		is an irreducible curve
		(more precisely, a projective line), it is enough to note that there is a component $C$ of 
		$\per_n(0)$ such that for all $\lambda \in C$, $f_\lambda^n(z,0)=(z,0)$.
		Indeed, that component $C$ intersects the plane at infinity in some (1 dimensional)
		hypersurface that is contained in 
		$E_z$ and is therefore
		equal to $E_z$. Moreover, it is clear that 
		$C \subset \mathcal{B}_z$.
		
		{
		Let us now pick any (non necessarily periodic)  $z\in J_p$. It
		is enough to prove the statement for the family $\skpa$
		for all $\alpha$ satisfying \eqref{eq_condition_subfamily}. Since 
		$E_z\cap E^\alpha$ is a single point, because of the inclusion already proved
		it is enough to show that $\mathcal{B}_z^\alpha$ is not compact. This follows by the continuity of the Green function
		and the fact that for the dense subset of periodic points the corresponding $\mathcal{B}_z^\alpha$ 
		is not compact, as proved above. The assertion for
		$\Bif_z$ also easily follows.}

		Let us now prove the second assertion.
		Observe that $\mathrm{Bif} \subset \bigcup_{z \in J_p} \mathcal{B}_z$.
		Therefore, the cluster set at infinity of $\mathrm{Bif}$ is contained in 
		the cluster set of $\bigcup_{z \in J_p} \mathcal{B}_z$, which a priori might be larger than the 
		union of cluster sets of $\mathcal{B}_z$; but the estimate from Lemma \ref{lem:adherenceinclus} 
		implies that this is not the case.
		Indeed, let $(a_n,b_n,c_n)_{n \in \N}$ be a sequence of points in $\bigcup_{z \in J_p} \mathcal{B}_z$
		going to infinity, and such that 
		$[a_n,b_n,c_n]\to [a,b,c] \in \ptwo_\infty$. 
		For each $n$ there is at least one
		$z_n \in J_p$ 
		such that $(a_n,b_n,c_n) \in \bcal_{z_n}$, and thus, by Lemma \ref{lem:adherenceinclus},
		$$|a_n z_n^2+b_n z_n+c_n| \leq 2 \sqrt{\sup_{z \in J_p} |a_n z^2+ b_n z+c_n| }.$$
		Since $(a,b,c) \mapsto \sup_{z \in J_p} |az^2+bz+c|$ is a vector space norm		
		on $\C^3$, there is 
		some constant $C_p>0$ such that for all $(a,b,c) \in \C^3$,
		$$\frac{1}{C_p} \|(a,b,c)\|_\infty \leq  \sup_{z \in J_p} |az^2+bz+c| \leq C_p  \|(a,b,c)\|_\infty$$
		and therefore, setting $M_n:=\|(a_n,b_n,c_n)\|_\infty$, we have
		$$\left|\frac{a_n}{M_n} z_n^2 + \frac{b_n}{M_n} z_n + \frac{c_n}{M_n}\right| \leq 2 C_p\sqrt{\frac{1}{M_n}}.$$
	Passing to the limit, we conclude that $z \mapsto az^2+bz+c$ must vanish at least once on $J_p$.
		
		This takes care of one inclusion.
		Now let us prove that the cluster of the bifurcation locus 
		on the hyperplane at infinity contains  the set $E$.
		Take $[a,b,c]\in E$, 
		so that
		 $az^2+bz+c=0$
		 for some $z \in J_p$.
		By Theorem \ref{teo_new_skew_things}, 
		we know that $\partial \bcal_z \subset \mathrm{Bif}$ (here, the boundary is taken in $\C^3$). By the first item,
		we know that $\partial \bcal_z$ accumulates on 
		$\ptwo_\infty$ to the set $\{[a,b,c] : az^2+bz+c=0\}$. This concludes the proof.
\end{proof}

\begin{cor}\label{coro:3bzbounded}
	Let $z_1, z_2,z_3 \in J_p$ be three distinct points. Then $\bcal_{z_1} \cap \bcal_{z_2} \cap \bcal_{z_3}$ is compact. In particular, $\ccal$ is compact.
\end{cor}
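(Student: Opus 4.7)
The plan is to combine the first item of Theorem \ref{teo_acc_infinity_bif} with a simple Vandermonde-type argument to show that the closures in $\mathbb{P}^3$ of $\bcal_{z_1}$, $\bcal_{z_2}$, $\bcal_{z_3}$ cannot simultaneously meet the hyperplane at infinity, and then deduce compactness.

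First I would recall that the cluster set at infinity of $\bcal_{z_i}$ is contained in the set $E_{z_i} = \{[a,b,c] \in \mathbb{P}^2_\infty \colon a z_i^2 + b z_i + c = 0\}$, which is the projectivization of the hyperplane of $\mathbb{C}^3$ defined by the linear form $L_{z_i}(a,b,c) := a z_i^2 + b z_i + c$. For three distinct points $z_1, z_2, z_3 \in J_p$, the Vandermonde determinant
\[
\det\begin{pmatrix} z_1^2 & z_1 & 1 \\ z_2^2 & z_2 & 1 \\ z_3^2 & z_3 & 1 \end{pmatrix} = (z_2 - z_1)(z_3 - z_1)(z_3 - z_2)
\]
is nonzero, so the three linear forms $L_{z_1}, L_{z_2}, L_{z_3}$ are linearly independent in $(\mathbb{C}^3)^*$. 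Hence their common zero locus in $\mathbb{C}^3$ is reduced to the origin, which means $E_{z_1} \cap E_{z_2} \cap E_{z_3} = \emptyset$ in $\mathbb{P}^2_\infty$.

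Now let $\overline{\bcal_{z_i}}$ denote the closure of $\bcal_{z_i}$ in $\mathbb{P}^3$. By Theorem \ref{teo_acc_infinity_bif}, $\overline{\bcal_{z_i}} \cap \mathbb{P}^2_\infty \subseteq E_{z_i}$, so
\[
\bigl(\overline{\bcal_{z_1}} \cap \overline{\bcal_{z_2}} \cap \overline{\bcal_{z_3}}\bigr) \cap \mathbb{P}^2_\infty \subseteq E_{z_1} \cap E_{z_2} \cap E_{z_3} = \emptyset.
\]
Therefore $\bcal_{z_1} \cap \bcal_{z_2} \cap \bcal_{z_3}$ is a bounded subset of $\mathbb{C}^3$. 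Since each $\bcal_{z_i} = \{\lambda \colon G_\lambda(z_i, 0) = 0\}$ is closed in $\mathbb{C}^3$ (the Green function is continuous in $(\lambda, z, w)$), the intersection is closed and bounded, hence compact.

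For the final assertion, $\ccal = \bigcap_{z \in J_p} \bcal_z$ is a closed subset of $\bcal_{z_1} \cap \bcal_{z_2} \cap \bcal_{z_3}$ as soon as $J_p$ contains at least three distinct points, which is automatic since $J_p$ is infinite. Therefore $\ccal$ is compact. The one conceptual step is the passage from ``cluster set at infinity disjoint'' to ``intersection bounded in $\mathbb{C}^3$'', which is immediate because $\mathbb{P}^3$ is compact: any unbounded sequence in $\mathbb{C}^3$ would subconverge in $\mathbb{P}^3$ to a point of $\mathbb{P}^2_\infty$ lying in each $\overline{\bcal_{z_i}}$.
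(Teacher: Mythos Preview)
Your proof is correct and follows essentially the same approach as the paper's. The paper's argument is terser---it observes that any accumulation point $[a:b:c]\in\P^2_\infty$ of the triple intersection would force $aX^2+bX+c$ to have the three distinct roots $z_1,z_2,z_3$, hence $a=b=c=0$---but this is exactly your Vandermonde computation together with the cluster-set inclusion from Theorem~\ref{teo_acc_infinity_bif} (or more directly Lemma~\ref{lem:adherenceinclus}).
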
	
	
\begin{proof}
	If $[a,b,c] \in \ptwo_\infty$ were accumulated by $\bcal_{z_1} \cap \bcal_{z_2} \cap \bcal_{z_3}$,
	then $aX^2+bX+c$ would have $z_1,z_2,z_3$ as roots, and we would have $a=b=c=0$,
	which is impossible. So $\bcal_{z_1} \cap \bcal_{z_2} \cap \bcal_{z_3}$ is closed and bounded
	in $\C^3$. 
	In particular, $\ccal = \bigcap_{z \in J_p} \bcal_z$ is compact.
\end{proof}

\subsection{The bifurcation current at infinity}\label{section_current_infinity}

We prove here 
the second part of Theorem \ref{teo_new_bif}.
Recall that we
are considering the family $\skp$ given by maps of the form
\[f_\lam = \pa{p(z), w^2 +az^2+ bz +c}\]
where $p$ is a fixed polynomial of degree 2, and $\lam = (a,b,c)\in\C^3$.

First of all, we prove that we can extend the bifurcation current
of the family $\skp$ (resp $\skpa$, see Section \ref{section_quadratic}) 
to the compactification
$\P^3$ (resp. $\P^2$)
of the parameter space
(see also \cite{berteloot2015geometry} for an analogous result for
quadratic rational maps).
\begin{lemma}
There exists a positive closed $(1,1)-$ current $\htbif$ on $\P^3$
(resp., for every $\alpha\in \C^3$  satisfying \eqref{eq_condition_subfamily} a positive closed $(1,1)$ current $\htbif^\alpha$ on $\P^2$)
of mass  1 and such that
\begin{enumerate}
\item $\htbif|_{\C^3} = \tbif$ (resp., $\htbif^\alpha|_{\C^2}= \tbif^\alpha)$;
\item for a generic $\eta\in \C$, the sequences $4^{-n} [\Per_n^J (\eta)]$
and $4^{-n} [\Per_n^v (\eta)]$
converge to $\htbif$ (resp., $\htbif^\alpha$)
 in the sense of currents of $\P^3$ (resp., $\P^2$).
\end{enumerate}
\end{lemma}

\begin{proof}
We prove the statement for the family $\skp$, the proof for $\skpa$ 
being completely analogous.

The existence of $\htbif$ follows by an application of Skoda-El Mir Theorem.
Indeed, by the equidistribution results in 
Appendix 
\ref{section_equidistribution},
the mass of $\tbif$ on $\C^2$ is $1$.
We thus can trivially extend $\tbif$ to $\P^3$, and the mass of the extension still satisfies $\|\htbif\|= 1$.

We now promote the equidistribution of $[\Per_n (\eta)]$ to $\tbif$ on $\C^3$
to an equidistribution to $\htbif$ on $\P^3$ (we denote by $\per_n (\eta)$ both $\per_n^J$ and $\per_n^v$,
the proof is the same).
 First recall (see Appendix \ref{section_equidistribution})
 that the $\Per_n (\eta)$ are actually algebraic surfaces on $\P^3$, of mass $\sim 4^n$.
 Thus, the sequence $4^{-n}[\Per_n (\eta)]$ gives a sequence of uniformly bounded (in mass) positive closed currents. We have to prove that any limit
 of this sequence coincides with $\htbif$.
 Let us denote by $T$ a cluster of the sequence.
 By Siu's decomposition Theorem, we have $T= S+ \beta [\P^2_\infty]$, where $S$ has no mass on $\P^2_{\infty}$.
 It follows from the description of the accumulation of the bifurcation locus given in Section \ref{section_quadratic}
 that $\beta=0$. Moreover, we have $S=\tbif$ on $\C^3$.
 This completes the proof.
\end{proof}

In order to study the trace of $\htbif \wedge [\P^2_\infty]$, we will first need to
obtain an analogous statement for the families $\skpa$.

\begin{teo}\label{teo_intersection_a0}
For any $\alpha \in \C^3$ satisfying \eqref{eq_condition_subfamily} we have
$\htbif \wedge [\P^1_\infty] = \int  [E_z \cap E_\alpha] \mu_p$.
\end{teo}

Notice that
the support of the measure in the right hand side
can be seen as the image 
of $J_p$ by a polynomial $\pi_\infty$
of degree at most 2 (and equal to 2 for generic $\alpha$). This polynomial can be explicitly computed
from the polynomial $az^2 + bz+c$ after substituting the relation on $a,b,c$ defining the family $\skpa$.
We denote by $J_{p,\infty}$ this support, and by $\mu_{p,\infty}$
the measure  $\int  [E_z \cap E_\alpha] \mu_p = (\pi_\infty)_* \mu_p$.

\begin{lemma}\label{lemma_lim_pern_mu_infty}
For a generic $\eta\in \D$ we have
$ 4^{-n} [\per_n^v (\eta)] \wedge [\P^{1}_\infty] \to \mu_{p, \infty}$.
\end{lemma}

\begin{proof}
By the equidistribution of the periodic points of $p$ towards $\mu_p$, it is enough to prove
that
\[[\per_n^v (\eta)]\wedge [\P^1_\infty] \sim 2^n (\pi_\infty)_*  \sum_{p^n (y)=y} \delta_y. \]
The sum in the right hand side can be taken with or without multiplicity.

First, notice that the support of $[\per_n^v (\eta)]\wedge [\P^1_\infty]$
is contained in the image by $\pi_\infty$
of
the union of the solution of $p^n (y)=y$.
Indeed, every $\per_n^v (\eta)$ is contained in the boundedness locus $\bcal_{z}$ of some fibre $z$
of period (dividing) $n$
(since
a periodic cycle of vertical multiplier $\eta\in \D$ attracts a critical point).
By 
Theorem \ref{teo_acc_infinity_bif},
$\bcal_z$
 precisely clusters at 
$\pi_\infty (z)$.

To conclude, it is enough
to
prove that for
every point 
$y$ of period $n$ for $p$
the Lelong number of $[\per_n^v (\eta)]\wedge [\P^1_\infty]$
at $(\pi_\infty)_* (y)$ is at least $\sim 2^n$.
Since the return map of the fibre corresponding to $y$
is of degree $2^n$, the above follows since the mass of $\Per_1 (\eta)$ in this one-dimensional family
is $\sim 2^n$.
 \end{proof}

 \begin{proof}[Proof of Theorem \ref{teo_intersection_a0}]
The good definition of the intersection follows the same argument as in \cite[Lemma 4.3]{berteloot2015geometry}.
We give it for completeness,
also to highlight that a different approach will be needed when considering the complete family.
We take any complex line $L$ intersecting $\P^1_{\infty}$ in a point disjoint from $J_{p,\infty}$.
The complement of this line is a copy of $\C^2$. Since the set $J_{p,\infty}$
is compact in this copy of $\C^2$, we can define the intersection here
by means of \cite[Proposition 4.1]{demailly1997complex}.
We then trivially
extend this intersection as zero on the line $L$.

\begin{remark}
When considering the full family, with the three-dimensional parameter space, we cannot find a line in $\P^2_{\infty}$
disjoint from $E$ (and thus decompose $\P^3$ as the union of $\C^3$ and
a hyperplane disjoint from $E$) and apply the argument above.
\end{remark}

We now prove that $\htbif \wedge [\P^1_\infty] = \mu_{p,\infty}$
 By Lemma \ref{lemma_lim_pern_mu_infty},
 it is enough to prove that
 \[
 4^{-n} [\per_n^v (\eta)]\wedge [\P^1_\infty] \to \htbif \wedge [\P^1_\infty].
 \]
The idea is the following: the main obstacle in getting the convergence above would be that some
components of $\per_n^v (\eta)$ become more and more tangent to $\P^1_\infty$
as $n\to \infty$ (possibly with some multiple of the plane at infinity in their cluster set).
 But this cannot happen,
because of
Lemma \ref{lem:adherenceinclus}.

To make 
the above precise 
we
 use the theory of \emph{horizontal positive closed currents}, introduced by
Dujardin \cite{dujardin2004henon}, see also
\cite{dinh2006geometry, pham, dujardin2007continuity}.
Recall that
a closed positive $(1,1)$-current in the product $\D \times \D$ is \emph{horizontal} if its support is contained
in
 $\D \times K$, for some $K$ compact in $\D$. We will use 
 the following result.

\begin{teo}[Dinh-Sibony \cite{dinh2006geometry}]\label{teo_ds_horizontal}
Let $\Rr$ be a closed positive horizontal $(1,1)$-current on $\D\times \D$, with support contained in $\D \times K$.
Then the \emph{slice} $\Rr_z$ of $\Rr$
is well defined for every $z \in \D$. The slices are measures on $\D$, supported in $K$, of constant mass.
If $\phi$ is a smooth psh function on $\D \times \D$
then
the function $z\mapsto \langle\Rr_z, \phi (z,\cdot)\rangle$ is psh.
\end{teo}

By the description of the cluster set of the $\mathcal{B}_z$'s given in Theorem \ref{teo_acc_infinity_bif},
we can find a biholomorphic image of a polydisc
$\Delta\subset \P^2$ such that the following hold (by abuse of notation, we think of the polydisc directly in $\P^2$):
\begin{enumerate}
\item $\{0\}\times \D \subset \P^1_\infty$;
\item there exists $K\Subset \D$ such that $\supp \htbif \cap \Delta \subset \D \times K$ and $\supp [\per_n^v (0)]\cap \Delta \subset \D \times K$ for every $n$.
\end{enumerate}
Indeed, suppose this is not true. We then find points in $\per_n^v (0)$ accumulating some point in $\P^1_\infty \setminus J_{p,\infty}$.
Since all the $\per_n^v (0)$ cluster on $J_{p,\infty}$, this contradicts
Lemma \ref{lem:adherenceinclus}

With this setting, we see that all the
$[\per_n^v(\eta)]$ and $\htbif$
are (uniformly) horizontal currents on $\Delta$. The convergence above can thus be rephrased as a convergence for the slices at 0:
\[4^{-n}[\per_n^v (\eta)]_0 \to \pa{\htbif}_0.\]
By standard arguments, the convergence can be tested against smooth psh test functions
on $\D$ .
By Theorem \ref{teo_ds_horizontal}
above we know that, for every $\phi$ smooth and psh in $\Delta$, the functions $u_n (z) := 4^{-n}[\per_n^v (\eta)]_z  (\phi (z,\cdot)) $
and $u(z):=  \pa{\htbif}_z (\phi(z,\cdot))$
are psh.
We claim that $u_n \to u$ in $L^1_{loc}$.
Indeed, the convergence of $4^{-n}[\per_n^v (0)]$ to $\htbif$
implies that of $\phi 4^{-n}[\per_n^v (\eta)]$ to $\phi \htbif$
in the product space $\Delta$. Since the projection on the first coordinate
of $\Delta$ is continuous, we have $u_n \to u$
as distributions. Thus, by \cite[Theorem 3.2.12]{hormander2007notions},
we have $u_n \to u$
in $L^1_{loc}$. This also implies that $u_n \to u$ almost everywhere.

Now, by Hartogs' Lemma
the $L^1_{loc}$ limit of a sequence of psh function is greater than or equal to
the pointwise limit.
In our case, the pointwise limit of the $u_n$ is given by
$u' (z) = \langle \lim_{n\to \infty} [\per_n^v (\eta)]_{z},\phi \rangle$.
Since $u' (0)= \langle \mu_{p,\infty}, \phi \rangle $
(by Lemma \ref{lemma_lim_pern_mu_infty}), we just need to prove that
$u' (0)\geq u(0)$.
Since $u$ is psh
 and $u=u'$ almost everywhere, we have a sequence of
$z_m \in \D$ converging to 0 and such that
$u(z_m)=u'(z_m) \to u(0)$. It is then enough to prove that the limit of the $u'(z_m)$
is equal to $u'(0)$, i.e., that
\[
\langle  \pa{\htbif}_{z_m}, \phi \rangle \to \langle \mu_{p,\infty}, \phi \rangle.
\]
Since $u(z_m)=u'(z_m)$, every limit $\nu$ of the slice measures on the left hand side
is a measure  on  $J_{p,\infty}$ of the form $(\pi_\infty)_* \nu'$, for some $\nu'$
probability measure on $J_p$.
 It is enough to prove that $\nu=\mu_{p,\infty}$.
Suppose this is not the case.
Lemma \ref{lemma_distinguish_measure} below 
gives a contradiction with the fact that
$\langle \mu_{p,\infty},\psi \rangle \leq  \langle \nu, \psi\rangle$
for every psh function $\psi$, as proved in the previous part. This completes the proof.
\end{proof}

\begin{lemma}\label{lemma_distinguish_measure}
Let $p$ be any polynomial on $\C$, $\mu$ its equilibrium measure and $\mu'$
a probability measure supported on the Julia set of $p$.
If $\mu\neq \mu'$ there exists a subharmonic function
$\psi$ on $\C$
such that
$\langle \mu,\psi \rangle > \langle \mu', \psi\rangle$.
\end{lemma}

\begin{proof}
	Let $p_{\mu'}$ and $p_\mu$ be the respective logarithmic potentials of $\mu'$ and 
	$\mu$, that is, 
	$p_\mu(z) = \int_{\mathbb{C}} \log |z-w| d\mu(w)$ and similarly for $\mu'$.
	Recall
	that the energy of a compactly supported Radon probability measure 
	$m$ is defined by $I(m) = \int_{\mathbb{C}} p_m(z) dm(z)$.
	Since $\mu$ is the equilibrium measure of the Julia set of $p$,
	we have
	that $I(\mu)>I(\mu')$ for every $\mu' \neq \mu$,
	see for instance \cite{ransford1995potential}.
	Therefore there must exist $z_0$
	such that
	$p_\mu(z_0)>p_{\mu'}(z_0)$.
	Setting
	$\psi(z)= \log |z-z_0|$, by definition of $p_\mu$ and $p_{\mu'}$
	we  have $\langle\mu, \psi\rangle> \langle\mu', \psi\rangle$.
	Thus,
	$\psi$ has the required property.
\end{proof}

We can now describe the intersection of the bifurcation current $\htbif$
with the hyperplane at infinity $\P^2_\infty$
in the full family.  This completes the proof of Theorem \ref{teo_new_bif}

\begin{teo}\label{teo_current_infinity_general}
The intersection
$\htbif \wedge [\P^2_{\infty}]$
is well defined and equal to
$\int_{z} [E_z] \mu_p(z)$
\end{teo}

\begin{proof}
We start proving that the intersection
 is well defined.
Since the support of $\tbif$
only clusters on $E=\cup_{z\in J_p} E_z$, we need only prove the statement in a neighbourhood of $E$.
Take a point $[a_0,b_0,c_0]\in E$.
There exist $z_0$ and $z_1$ (not necessarily distinct) such that $[a_0,b_0,c_0]\in E_{z_0},E_{z_1}$
but $[a_0,b_0,c_0]\notin E_z$ for every $z\neq z_0,z_1$.
To
prove that the intersection is well defined, we prove that 
$\htbif \wedge [\P^2_{\infty}]$
has locally bounded mass near
$[a_0,b_0,c_0]$. We fix local coordinates $x,y$, centred at $[a_0,b_0,c_0]$ 
and  such that
 the coordinate axis are transversal to both $E_{z_0}$ and $E_{z_1}$
at the origin.
Theorem \ref{teo_intersection_a0}
 implies that the intersection $\htbif \wedge [\P^2_{\infty}] \wedge [L]$ is well defined
for lines $L$ 
parallel (or almost parallel) to the $x$ and $y$ axis. Since all these intersections
are measures with uniformly bounded mass, the intersection
between $\htbif \wedge [\P^2_{\infty}]$
and the currents $\int_{x\in I} [L_x]$ and $\int_{y\in I} [L_y]$
are well defined, where $I$ is a small open neighbourhood of $0$, $L_x$ the line $\left\{x=\mbox{constant}\right\}$,
$L_y$ the line $\left\{y=\mbox{constant}\right\}$ and the integrations are against the standard Lebesgue measure.
This implies that the intersections between $\htbif \wedge [\P^2_{\infty}]$
and respectively $dx\wedge i d\b x$ and
$dy\wedge i d\b y$
are of locally bounded mass, and thus well defined.

We can now prove the formula in the statement.
For every $\eta$
the intersection at infinity of the current
$[\per_n^v (\eta)]$ is given by an average of currents of the form $[E_z]$,
with $z$ such that $p^n(z)=z$.
This implies that
$
\htbif \wedge [\P^2_{\infty}]
=
\int [E_z] \nu$
for some measure $\nu$ on $J_p$.
We can thus find $\nu$ by considering a slice of the current above
by a complex line corresponding to a family $\skpa$ with $\alpha$ satisfying \eqref{eq_condition_subfamily}.
The assertion then follows from Theorem
\ref{teo_intersection_a0}.
\end{proof}

\section{Vertical expansion, hyperbolicity, and Theorem \ref{teo_new_hyp}}\label{section_vertical_hyp}

In this section we prove Theorem \ref{teo_new_hyp}. The first point is proved in Section \ref{section_stab_preserves}, the second
in Section \ref{section_hyp_D}. In Section \ref{section_hyp_other} we give an example of a different kind of unbounded
hyperbolic component.

\subsection{Stability preserves hyperbolicity}\label{section_stab_preserves}
As mentioned in the introduction, a crucial point
of the one-dimensional theory of stability and bifurcations
is that stability preserves hyperbolicity. This result crucially relies on 
a characterization of stability not available in higher dimension, and it
is then an open problem whether the same would hold in this generality.
The following answers this question for any family of polynomial skew products.

\begin{teo}\label{teo_stab_hyp}
	Let $(f_\lambda)$ be a stable family of polynomial skew products, and let $\lambda_0$ be a parameter.
	\begin{enumerate}
		\item If $(f_\lambda)$ has constant base $p$ and $f_{\lambda_0}$ is vertically expanding over $J_p$,
		then for all $\lambda$, $f_\lambda$ is vertically expanding over $J_p$.
		\item If $f_{\lambda_0}$ is hyperbolic,
		then for all $\lambda$, $f_\lambda$ is hyperbolic.
	\end{enumerate}
\end{teo}

\begin{lemma}\label{lemma_for_stabhyp}
Let $f$ be a 
polynomial skew product with base $p$.
	Assume $(z,w) \in J_p \times \C$ is accumulated by $C_{J_p}$. Then
	there exists a sequence $(z_m, w_m) \in J_p \times \C$ of iterates of critical points,
	such that $(z_m,w_m) \to (z,w)$ and $z_m$ is a repelling periodic point for $p$. 
\end{lemma}

\begin{proof}
	By assumption, there is a sequence $(y_m,c_m) \in J_p \times \C$, such that $q_{y_m}'(c_m)=0$ and 
	$f^{n_m}(y_m,c_m) \to (z,w)$. Given any $\epsilon>0$, there exists $M \in \N$ such that 
	$\|f^{n_m}(y_m,c_m)  - (z,w) \| \leq \epsilon$ for all $m \geq M$. 	
	Since $f^{n_m}$ is continuous, 
	there exists $\delta_m>0$
	such that if $\|(z_m,c_m') - (y_m,c_m) \| \leq \delta_m$, then
	$\| f^{n_m}(z_m, c_m') - f^{n_m}(y_m,c_m)\| \leq \epsilon$. This implies that
	$\| f^{n_m}(z_m, c_m') - (z,w)\| \leq 2\epsilon$.		
	Since repelling periodic points are dense in $J_p$, we can find 
	$z'_m$ periodic and repelling arbitrarily close to $y_m$. We can then take 
    $c_m'$ such that $(z'_m,c_m') \in C_{J_p}$ is 
	$\delta_m$-close to $(y_m,c_m)$. The point
	$(z_m,w_m):=f^{n_m}(z_m,c_m')$ is then $2\epsilon$-close to $(z,w)$. Since $z'_m$ is periodic and repelling for $p$, the same holds
	for $z_m = p^{n_m}(z'_m)$.	 
	Since $\epsilon>0$ was arbitrary, the lemma is proved.  
\end{proof}

\begin{proof}[Proof of Theorem \ref{teo_stab_hyp}]
	Assume by contradiction
	that there exists $\lambda_1$
	such that $f_{\lam_1}$  is not vertically expanding. 
	We can replace our parameter space with any relatively compact connected open subset containing $\lam_0$ and $\lam_1$. 
	By Theorem \ref{teo_jonsson_unique},
	there exists
	$(z,w) \in J_{f(\lambda_1)}$
		such that $(z,w)$ is accumulated by the post-critical set of $f_{\lambda_1}$ over $J_p$.
		By Lemma \ref{lemma_for_stabhyp}, 
		there is a sequence $(z_m,w_m)$ of iterates of critical points such that $z_m$ is
		periodic for $p$ and $(z_m,w_m) \to (z,w)$.
	
	We first treat the case where it is possible to follow holomorphically all critical points over $J_p$
	as holomorphic functions of the parameter $\lambda$.
	Notice that this is the case in particular for 
	the polynomial skew products of degree 2,
	whose critical points are of the form $(z,0)$ (and so independent from $\lambda$).
	 Set 
	$$h_m(\lambda):=f_\lambda^{n_m}(y_m,c_m(\lambda))$$
	where $f_{\lambda_1}^{n_m}(y_m,c_m(\lambda_1))=(z_m, w_m)$, and $(y_m, c_m(\lambda))$ is a critical 
	point of $f_\lambda$. By definition
	we have $(\lambda, h_m(\lambda))$
is in the postcritical set.
	We write 
	$h_m(\lambda)=:(z_m, w_m(\lambda))$. 
	
	Since $(z,w) \in J_{f(\lambda_1)}$, 
	there exists 
	a sequence of repelling cycles 
	of the form $(z_m, \gamma_m(\lambda_1)$ converging to $(z,w)$
	 (by the lower semi-continuity of $z \mapsto J_z$ and the density of 
	repelling cycles).
	Since $f_\lam$ is stable, repelling cycles can be followed holomorphically. We denote by 
	$(z_m,\gamma_m(\lambda))$ the motion of $(z_m, \gamma(\lam_1))$.
	Again by the stability of the family, since there are no Misiurewicz parameters, we must have $\gamma_m(\lambda) \neq w_m(\lambda)$
	for all $m$ and for all $\lambda$. Since the sequence $\gamma_m$ is uniformly bounded, it is normal and we can assume that $\gamma_m$ converges to some
	holomorphic map $\gamma$ with $\gamma(\lambda_1)=w$.
	
	\begin{claim}\label{claim_in_stabhyp}
	The sequence $(w_m (\lam))$ is also normal.
	\end{claim}
	
	Assuming this claim, we can get the desired contradiction by taking a limit $w(\lam)$ for the sequence $w_m(\lam)$. 
Indeed, recall that $\gamma(\lam)\neq w_m (\lam)$
for all $\lambda$ and $m$. Since $\gamma(\lambda_1)=w(\lambda_1)$, by  Hurwitz's Theorem the only possibility if that
$\gamma(\lam)\equiv w(\lam)$ for all $\lam$.
Since
$\gamma(\lambda_0) \neq w(\lambda_0)$ by assumption, this gives the desired contradiction.

	\begin{proof}[Proof of Claim \ref{claim_in_stabhyp}]
	Since the family is stable, $w_m(\lambda)$ avoids the repelling cycles
	for all $m$ and $\lambda$.
	 Let $a_m(\lambda), b_m(\lambda)$ 
	be two sequences of (holomorphic motions of)
	repelling periodic points in the fibre $z_m$.
	Up to passing to subsequences, 
we can
	 assume that $a_m (\lam)\to a(\lam)$ and $b_m (\lam)\to b(\lam)$ (as holomorphic functions in $\lambda$). 
	 We can 
	also assume that $|a_m(\lambda)-b_m(\lambda)| \geq \epsilon_0>0$ for all $m$ and $\lambda$.
	Then, for all $\lam$, we have
	 $w_m(\lambda) \notin \{ a_m(\lambda), b_m(\lambda), \infty \}$. It follows that the family
	$g_m(\lambda):=\frac{w_m(\lambda)-a_m(\lambda)}{b_m(\lambda)-a_m(\lambda)}$ 
	avoids $0,1,\infty$, hence is normal by Montel Theorem and converges, up to extraction, to some $g(\lam)$.
	Since $|a_m(\lambda) - b_m(\lambda)| \geq \epsilon_0$, the sequence
	$w_m (\lam) = a_m(\lam) + g_m(\lam)\cdot (b_m(\lam)-a_m(\lam))$ 
	converges 
	to  $w(\lam) := a(\lam) +g(\lam)\cdot (b(\lam)-a(\lam))$. The claim is proved.
	\end{proof}

	\medskip
	
	We now explain how to adapt the above arguments 
	in the case where it is not possible 
	to follow all critical points as holomorphic functions of $\lambda$. As before, we 
	start with sequences  of integers $n_m$ and points $y_m \in J_p$ such that $f_{\lambda_1}^{n_m}(y_m, c_m)$
	accumulates to some point in $J_2(f_{\lambda_1})$, and $c_m$ is a critical point of $q_{y_m,\lambda_1}$. 
	The accumulation point in $J_2(f_{\lambda_1})$ can 
	also be accumulated by repelling periodic points 
	$(z_m,\gamma_m(\lambda_1))$.
	
	We now define the function
	$$h_m(\lambda):=\prod_{c_i} (f^{n_m}(y_m, c_i) - \gamma_m(\lambda)),$$
	where the product is taken over 
	the set of critical points $c_i$ of $q_{y_m,\lambda}$ whose orbits
	are bounded. Observe now that the function $h_m$ is holomorphic. 
	Indeed, \emph{for a fixed $m$}, it is always possible to mark the critical points of
	$q_{y_m,t}$ as holomorphic functions $c_i(t)$, up to
	passing to a re-parametrization $\phi(t)=\lambda$, where $\phi$ is a finite branched cover.
	
	Since the family is stable, each critical point $c_i(t)$ either has bounded orbit for all $t$ or unbounded orbit 
	for all $t$. Therefore, $t \mapsto h_m \circ \phi(t)$ is holomorphic, and since $\lambda \mapsto h_m(\lambda)$ is continuous and holomorphic outside the branch locus of $\phi$, it is also holomorphic on the whole
	family. Moreover, the sequence $(h_m)$ is locally uniformly bounded in $\lambda$, hence normal; 
	and for all $m$ and $\lambda$ we must have $h_m(\lambda) \neq 0$ since otherwise this would create  a 
	Misiurewicz parameter, contradicting the stability of the family. 
	From there, the proof works as in the previous case.
\end{proof}

For families of polynomial skew products,
it thus makes sense to talk about \emph{hyperbolic components} (respectively \emph{vertically expanding components}), i.e., stable components whose elements are (all)
hyperbolic (respectively, vertically expanding). 
We will characterize and classify 
some components of this kind
in the next sections.
An ingredient in our classification is given by the following result.

\begin{lemma}\label{lemma_extension_hol_motion}
	Let $f_{\lam}(z,w)=(p(z),q_{\lam}(z,w))$
	be a family of polynomial skew products defined on some parameter
	space $M$.
	Assume that $f_{\lam_0}$
	is uniformly vertically expanding above $J_p$.
	Then for a small enough neighbourhood $U$ of $\lam_0$ in $M$, there exists a unique continuous map
	$h: U \times J_2(f_{\lam_0}) \to \C^2$, such that:
	\begin{enumerate}
		\item for all $\lam \in U$, $h_\lam:=h(\lam, \cdot) : J_2(f_{\lam_0}) \to J_2(f_\lam)$ is a homeomorphism
		conjugating the dynamics, and
		\item $h_\lam$ is of the form $h_\lam(z,w)=(z, g_\lam(z,w))$.
	\end{enumerate}
\end{lemma}

\begin{proof}
	We follow the classical one dimensional construction
	of the conjugation
	valid for hyperbolic polynomial maps, see e.g., \cite{buffhubbard}.
	For ease of notation, we write $f_0$ for
	$f_{\lam_0}$ and assume that $\lam\in \D$.
	
	By uniform expansiveness and continuity, there exist $\eps$ and $C>1$
	such that, for every $\lam$ sufficiently small
	and every $(z,w)\in J_{2}(f_0)$ we have
	$\abs{q_{\lam,z}'(w')}> C>1$ for every $w'\in B(w,\eps)$.
	This implies that, denoting by $(z_n,w_n)$ the orbit of $(z,w)$
	under
	$f_0$, we have
	$q_{\lam,z} (B(w_n,\eps))\supset B(w_{n+1}, C'\eps)$ for some $1<C'<C$.
	It follows that the diameter of $B(w_{n+1}, \eps)$ inside $q_{\lam,z} (B(w_n,\eps))$
	is uniformly bounded from above and that, if $x,y \in B (w_n,\eps)$ and $q_{\lam,z} (x), q_{\lam,z}(y) \in B(w_{n+1},\eps)$, then
	\[
	d_{B (w_{n+1},\eps)} (q_{\lam,z}(x), q_{\lam,z}(y)) > C'' d_{B (w_n, \eps)} (x,y).
	\]
	for some uniform constant $C''>1$. Thus, the intersection
	\[B(w,\eps) \cap q_{\lam,z}^{-1} (B(w_1,\eps)) \cap \dots \cap q_{\lam, z_{n-1}}^{-1} \circ \dots \circ q_{\lam,z}^{-1} (B(w_n,\eps))\]
	consists of a single point.  Denote it by $g_\lam(z,w)$. Then, $q_{z,\lam} \circ g_\lam(z,w) = g_\lam(z_1, q_{z,0} (w))$.
	
	Let us prove that the map $g$ constructed above is continuous at $(\lam_0,z_0,w_0)$. As proved above, for $\eps>0$ small enough a basis of open neighbourhoods
	of $g_{\lam_0}(z_0,w_0)$ is given by the intersections
	$V_n(\eps):=\bigcap_{i=0}^n \pa{Q^n_{z_0,\lam_0}}^{-1} (B(w_n, \eps))$. Let 
	$$U_n(\eps):=\{(\lam,z,w) \in \D \times \C^2 : \forall k \leq n,  \left|Q_{z,\lam}^n(w)-Q_{z_0,\lam_0}^n(w_0) \right| < \eps \}.$$
	Then $U_n(\eps)$ is an open neighbourhood of $(\lam_0,z_0,w_0)$, and for all $(\lam,z,w) \in U_n(\eps)$,
	$g_\lam(z,w) \in V_n(2\eps)$. This proves the continuity of $g$. 
	
	Now we set $h_\lam(z,w):=(z, g_\lam(z,w))$. Since we can start the construction at a different $\lam$ near 0, the map $h_\lam$
	is invertible and thus a homeomorphism. Finally, to prove the uniqueness of $h$, just note that for any $\lam \in U$, $h_\lam$ must map periodic points of $f_{0}$ to periodic points of $f_\lam$ of same period;
	since periodic points of a given period are discrete, the values of $h_\lam$ are uniquely defined on periodic points, and so uniquely defined by density.
\end{proof}

\begin{cor}\label{cor:isotopy}
	Let $f_0, f_1$ be two polynomial 
	skew products in the same vertically expanding component. Then there exists an isotopy $(h_t)_{t \in [0,1]}$ in $J_p \times \C$ between $J_2(f_0)$ and $J_2(f_1)$, fixing 
	each vertical fiber.
\end{cor}

\begin{proof}
	Since $f_0$ and $f_1$ are in the same vertically expanding component, there is a continuous path $(f_t)_{t \in [0,1]}$ joining them, such that $f_t$ is vertically expanding for all $t \in [0,1]$.
	Covering the path $(f_t)$ with
	 finitely many small enough balls, we can apply Lemma \ref{lemma_extension_hol_motion} to find the required isotopy $h_t$ (the uniqueness in Lemma \ref{lemma_extension_hol_motion} makes it possible to glue each piece of the isotopy in a coherent manner).
\end{proof}

\subsection{Unbounded hyperbolic components in $\dcal$}
\label{section_hyp_D}

We establish here the second part of Theorem \ref{teo_new_hyp}.  
We work with the family $\skp$ defined as in Section \ref{section_quadratic}. Recall that this means working with
the family
$f_{\lam = (a,b,c)} (z,w)= (p(z), w^2 + az^2 + bz+c)$
for some fixed polynomial $p$ of degree $2$.

\begin{prop}\label{prop:defomega}
	Let $[\lam_i]:=[a_i,b_i,c_i] \in \ptwo_\infty \backslash E$, $i \in \{0,1\}$, and for every bounded Fatou component $U$ of 
	$p$, let $s_i(U)$ denote the number of roots of $a_i X^2+b_i X+c_i$ in $U$. If for every $U$  we have $s_0(U)=s_1(U)$
	then  both $[\lam_i]$ are accumulated by the same connected component of $\mathcal{D}$.
\end{prop}

\begin{proof}

	Theorem \ref{teo_acc_infinity_bif} implies that if $[\lam_0]$ and 
	$[\lam_1]$ are in the same connected component of $\ptwo_\infty \backslash E$, then they are accumulated 
	by the same connected component of $\Dd$. This can be seen by picking a path $t \mapsto [\lam(t)]$
	in $\ptwo_\infty \backslash E$ joining $[\lam_0]$ and $[\lam_1]$, and lifting it to a path 
	$t \mapsto \lam(t) \in \C^3$. Then, for all $n \in \N$  large enough, $n \lam(t) \in \Dd$ for all $t \in [0,1]$, 
	so $n \lam(0)$ and $n \lam(1)$ are in the same component of $\Dd$. Moreover, as $n \to +\infty$, $n \lam(t) \to [\lam(t)] \in \ptwo_\infty$. 
	
	Therefore, it remains to see that if $s_0(U)=s_1(U)$ for all bounded Fatou component $U$ of $p$, then $[\lam_0]$ and $[\lam_1]$ 
	belong to the same connected component of $\ptwo_\infty \backslash E$. Since $E$ is closed, we may 
	slightly perturb $[\lam_0]$ and $[\lam_1]$ if necessary to 
	assume 
	  that $a_i \neq 0$, and so choose representatives of the form $(1,b_i,c_i)$, so that both polynomials $X^2+b_i X+c_i$ 
	have two roots $x_i, y_i$ counted with multiplicity. By assumption, we may assume that $x_0$ and $x_1$ 
	(respectively $y_0$ and $y_1$) belong to the same Fatou components of $p$. Choosing paths $x(t)$ (respectively $y(t)$) joining $x_0$ and $x_1$ (respectively $y_0$ and $y_1$) inside those Fatou components, the path $t \mapsto [1, -x(t)-y(t), x(t) \cdot y(t)]$ joins $[\lam_0]$ and $[\lam_1]$ inside 
	$\ptwo_\infty \backslash E$.
	This concludes the proof.
\end{proof}

Let $\pi_0(\mathring{K_p})$ denote the set of all bounded Fatou components of $p$, and  set
\begin{equation}
	\mathcal{S}_p = \Big\{s: \pi_0(\mathring{K_p}) \to \N :  \sum_{U \in \pi_0(\mathring{K_p})} s(U) \leq 2 \Big\}.
\end{equation}

To any $[\lam]=[a,b,c] \in \ptwo_\infty \backslash E$, we can associate an element $s \in \mathcal{S}_p$ 
defined as follows: $s(U)$ is the number of roots of $aX^2+bX+c$ in $U$. 
It is easy to check that all $s \in \mathcal{S}_p$ can be realized in that way.
Proposition \ref{prop:defomega} asserts that to any such $s$ is 
associated a unique hyperbolic component 
of $\Dd'$: in other words, we have defined a map $\omega : \mathcal{S}_p \to \pi_0(\Dd')$,
where $\pi_0(\Dd')$ is the set of hyperbolic components of $\Dd'$.

Our result below completes the classification of these components, for $p$ with locally connected Julia set. It also completes
the proof of Theorem \ref{teo_new_hyp}. Notice that the assumption that 
  $J_p$ is locally connected implies that
the boundary of every bounded
Fatou component of $p$ is a Jordan curve, see \cite{notesorsay2}. This assumption is automatically satisfied if $p$ is hyperbolic.
Recall further that all bounded Fatou components of $p$ are simply connected.

\begin{teo}\label{teo_classification_components}
Assume that $J_p$ is locally connected. Then $\omega : \mathcal{S}_p \to \pi_0(\Dd')$ is 
bijective.
\end{teo}

Since $\omega$ is  surjective, all it remains is to prove that it is injective.
The rest of the section is devoted to that task.  We
will need the following definition.

\begin{defi}
Given $\lam \in \mathcal D$ set $r(f_\lam):=\inf_{z \in J_p} |az^2+bz+c|$.
	Let $\gamma\colon [0,1]\to \C^2$ be a simple closed curve,
		given by $\gamma(t)=(\gamma_z(t), \gamma_w(t))$. We say that $\gamma$ is admissible (for $f_\lam$)
	if for all $t \in [0,1]$, $|\gamma_w (t)| < r(f_\lam)$.
\end{defi}

\begin{lem}\label{lem:admissible}
	Let $[a_0,b_0,c_0] \in \ptwo_\infty$ be such that the roots of $a_0 X^2+b_0 X+c_0$
	 are in the Fatou set of $p$.
	Let   $\lam=(a,b,c) \in \C^3$ be such that $[a,b,c]= [a_0,b_0,c_0]$. If
	 $|\lam|$ is large enough,
	the map $f_\lam$
	satisfies the following properties: 
	\begin{enumerate}
	\item if $C$ is a curve such that $C \subset K(f_\lam)$, then 
$C$ is admissible;
		\item\label{item-admissible-preim} If $C$ is an admissible curve, then so is every component of $f_\lam^{-1}(C)$;
		\item\label{item-admissible-radius} There exists $0<r^*(f_\lam)<r(f_\lam)$ such that for all $z \in J_p$, $K_z \subset \D(0,r^*(f_\lam))$.
	\end{enumerate}
\end{lem}

\begin{proof}
	Set $R(f_\lam):=\sup_{z \in J_p} | az^2 +bz +c|$. Then 
	there exists a positive constant $\alpha=\alpha(a_0,b_0,c_0,p)$
	 such that
	 $\frac{1}{\alpha} |\lam| \leq r(f_\lam) \leq R(f_\lam) \leq \alpha |\lam|$.
	 The first item then follows from Lemma \ref{lem:adherenceinclus}.
	Moreover $\lam \in \Dd$
	for $|\lam|$ large enough
	 and for all $z \in J_p$, 
	we have $K_z \subset \D(0,2\sqrt{R(f_\lam)})$, again by Lemma \ref{lem:adherenceinclus}.
	Therefore we may take 
	$r^*:=2\sqrt{R(f_\lam)}$
	 for item \eqref{item-admissible-radius}. 
	For item \eqref{item-admissible-preim}, 
	observe that if $(z,w) \in F^{-1}(C)$ and $C$ is admissible, then 
	$|w| = O(\sqrt{|\lam|})$ and therefore any component of $F^{-1}(C)$ is also admissible.
\end{proof}

In the following, we fix a pair  $U,V$ of bounded Fatou components
 of $p$ with $p(U)=V$. Our assumption implies
that $\partial U$ and $\partial V$
are Jordan curves. 
We denote by
$s$
the number of roots of $aX^2+bX+c$ lying in $U$, counted with multiplicity.
Given $\lam$ and $C$ 
 a simple closed curve in $\partial V \times \C$, we will set
$\hat C:=f_\lam^{-1}(C) \cap (\partial U \times \C)$.

\begin{defi}
	Let $C_0 \subset \C$  and $\tilde C_0 \subset C_0 \times \C$ be two topological circles. 
	We will say that $\tilde C_0$ \emph{winds $n$ times above $C_0$} 
	if the projection $\pi_1 : \tilde C_0 \to C_0$ is an 
	unbranched covering of degree $n$.
\end{defi}

\begin{lem}\label{lem:pullbackcurve1}
Assume that $\lam=(a,b,c)\in \Dd$, 
	 the roots of $aX^2+bX+c$ are in the Fatou set of $p$ and that 
$|\lam|$
	 is large enough so
	that Lemma \ref{lem:admissible} holds.
	Assume that
	$C$ winds once above $\partial V$ and is admissible.
	Then
	\begin{enumerate}
		\item if $s=0$ or $s=2$, $\hat C$ has two connected components $C_1$ and $C_2$, and their 
		linking number in $\partial U \times \C$ 
		is equal to $s/2$. Both components wind once above $\partial U$;
		\item if $s=1$, then $\hat C$ is connected and winds twice above $\partial U$.
	\end{enumerate}
\end{lem}

\begin{proof}
	Let $\delta \in \{1,2\}$ be the degree of $p: U \to V$ ($\delta=1$ if $U$ contains no critical point of $p$,
	and $\delta=2$ otherwise). Let $\gamma : \R/\Z \to C$ defined by $\gamma(t):=(\gamma_V(t),\gamma_w(t))$
	be
	a parametrization of $C$.
	Let $\gamma_1 : \R \to \C^2$ be a lift by $F$ of $t \mapsto \gamma(\delta t)$.
We can define a parametrization $\gamma_U$
of $\partial U$ by	
	$p \circ \gamma_U(t)=\gamma_V(\delta t)$ for all $t \in \R/\Z$. So,
	the map $\gamma_1$ is of the form 
	$$\gamma_1(t)=(\gamma_U(t), w_t)$$
	and $w_t$ satisfies the equation
	$$w_t^2=\gamma_w(t)-(a \gamma_U(t)^2 +b\gamma_U(t)+c).$$
	Observe that the curve $t \mapsto \gamma_w(t)-(a\gamma_U(t)^2+b\gamma_U(t)+c)$
	turns $s$ times around $w=0$.
	We now distinguish between the cases $s \in \{0,2\}$ or $s=1$.
	
		(1)	
	If $s \in \{0,2\}$, since the curve $t \mapsto \gamma_w(t)-(a\gamma_U(t)^2+b\gamma_U(t)+c)$
		turns an even number of times around $w=0$ as $t$ goes from $0$ to $1$, we have $w_1=w_0$ by 
		monodromy. Therefore $\gamma_1(1)=\gamma_1(0)$, and 
		$\gamma_1(\R)$ is a closed loop winding once above $\partial U$. Since $F: \hat C \to C$ has degree $2\delta$, 
		and $F: \gamma_1(\R) \to C$ has degree $\delta$, there is a second lift $\gamma_2 : \R \to C_2$ parametrizing a 
		second connected component of $\hat C$. Moreover, $\gamma_2$ has the form 
		$$\gamma_2(t)=(\gamma_U(t),-w_t)$$
		and therefore the linking number
		in $\partial U \times \C$ of $C_1$ and $C_2$ is given by the number of turns around $w=0$ 
		of $t \mapsto w_t$ as $t$ varies from $0$ to $1$, namely $s/2$. 
	
	(2) If $s=1$: now the curve $t \mapsto \gamma_w(t) - (a \gamma_U(t)^2+b\gamma_U(t)+c)$ turns exactly
		once around $w=0$ as $t$ goes from $0$ to $1$. Therefore, by monodromy we have $w_1=-w_0$ and $w_2=w_0$.
		This means that the support of $\gamma_1(\R)$ is a curve that winds twice above $\partial U$. Moreover,
		as $\gamma_1(\R) \cap \{z=\gamma_U(0) \}= \{ (\gamma_U(0), \pm w_0 )\}$, the degree of 
		$F: \gamma_1(\R) \to C$ is $2\delta$ and therefore $\hat C = \gamma_1(\R)$.
\end{proof}

\begin{lem}\label{lem:pullbackcurve2}
	 Assume that $\lam=(a,b,c)\in \Dd$, 
	 the roots of $aX^2+bX+c$ are in the Fatou set of $p$ and that 
$|\lam|$
	 is large enough so
	that Lemma \ref{lem:admissible} holds.
	Assume that $C$ winds twice above $\partial V$.
	Then $\hat C$ has two connected components $C_1$ and $C_2$. Both are curves that wind twice above
	$\partial U$ and their linking number in $\partial U \times \C$ is equal to $s$.
\end{lem}

\begin{proof}
	The proof is similar to that of the previous Lemma.
	Let $\delta \in \{1,2\}$ be the degree of $p: U \to V$.
	Since $C$ winds twice above $\partial V$, it has a parametrization 
	$\gamma : \R/\Z \to C$ of the form $\gamma(t)=(\gamma_V(2t), \gamma_w(t))$,
	where for all $t \in \R$, $\gamma_w(t+\frac{1}{2}) \neq \gamma_w(t)$.
	As before, let $\gamma_1: \R \to \hat C$ be a lift by $F$ of $t \mapsto \gamma(\delta t)$.
	Then $\gamma_1$ has the form $$\gamma_1(t)=(\gamma_U(2t),w_t),$$
	and $t \mapsto w_t$ satisfies the equation
	$$w_t^2 = \gamma_w(t)-(a \gamma_U(2t)^2+b\gamma_U(2t)+c).$$
	Note that $a \gamma_U(1)^2+b\gamma_U(1)+c = a \gamma_U(0)^2+b \gamma_U(0)+c$ but 
	$\gamma_w(\frac{1}{2}) \neq \gamma_w(0)$, hence $w_{1/2} \neq w_0$.
	Also note that as $t$ varies from $0$ to $1$, the loop $t \mapsto  \gamma_w(t)-(a \gamma_U(2t)^2+b\gamma_U(2t)+c)$ turns $2s$ times around $w=0$. Therefore by monodromy, we 
	have $w_1=w_0$, so that $\gamma_1(1)=\gamma_1(0)$ and $\gamma_1(\R)$ is a closed loop that 
	winds twice above $\partial U$. 
	
	Again, the degree of $F: \hat C \to C$ is $2\delta$, and the degree of $F: \gamma_1(\R) \to C$
	is only $\delta$. Moreover, $w_{1/2} \neq -w_{0}$ (since $w_{1/2}^2 \neq w_0^2$), and therefore
	$\gamma_2 : \R \to C$ defined by $\gamma_2(t)=(\gamma_U(2t),-w_t)$ parametrizes a second and different 
	component of $\hat C$. For degree reasons,
	$\hat C$ is exactly equal to 
	$C_1 \cup C_2$, where $C_i$ is the support of $\gamma_i(\R)$. Each $C_i$ is a loop winding twice above $\partial U$,
	and $C_1 \cap C_2 = \emptyset$ since for all $t \in \R$, $w_t \neq 0$. Therefore the $C_i$ 
	are the connected components of $\hat C$. 
	Moreover, since $\gamma_1(t)=(\gamma_U(2t), w_t)$ and $\gamma_2(t)=(\gamma_U(2t),-w_t)$,
	the linking number of $C_1$ and $C_2$ is given by the number of times that $t \mapsto w_t$ turns around
	$w=0$ as $t$ varies from $0$ to $1$, namely $s$.
\end{proof}

On our way to prove Theorem \ref{teo_classification_components}, 
we will need the following topological description of 
the Julia sets of maps in $\dcal$, which has independent interest.

\begin{defi}\label{def:suspension}
	Let $\Sigma\subset \C$ be a Cantor set that is invariant under $w \mapsto -w$.
	The \emph{suspension} $S$ of $\Sigma$ is given by $S:= ([0,1] \times \Sigma) / \sim$,
	where  $(0,w)  \sim (1,-w)$.   
\end{defi}

\begin{teo}\label{th:topologyjulia}
		Assume that $J_p$ is locally connected, and let $U$ be a bounded Fatou component of $p$.
	Let $[a,b,c] \in \ptwo_\infty \backslash E$. For all representative $\lam=(a,b,c)$ of norm large enough,
	the following holds:
	\begin{enumerate}
		\item If there exists $n \in \N^*$ such that $p^n(U)$ contains exactly one root of $aX^2+bX+c$, 
		then $J_{\partial U}:= \cup_{z\in \partial U} \{z\}\times J_z$ is homeomorphic to $S$.
		\item  Otherwise, $J_{\partial U}$ is homeomorphic 
		to $S^1 \times \Sigma$.
	\end{enumerate}
Moreover, if $p$ has no bounded Fatou components then $\mathcal{D}'$ has only one component, in which the Julia set 
is homeomorphic to $J_p \times \Sigma$.
\end{teo}

\begin{proof}
	If $p$ has no periodic bounded Fatou component, then by Sullivan's Theorem $p$ only has 
	the basin of infinity as a Fatou component. In this case, there is only one component in 
	$\mathcal D'$. Indeed, in this case  $\mathcal{S}_p$ is a singleton and $\mathcal{D}'=\omega(\mathcal{S}_p)$.
	This component must necessarily contain product maps; therefore $J_2(F)$ is homeomorphic to $J_p \times \Sigma$.
	From now on, we assume that $p$ has a cycle of bounded Fatou components.

	Let $r^*=r^*(f_\lam)$ be given by Lemma \ref{lem:admissible}.
	Let $U_0$ be a bounded periodic Fatou component for $p$ of period $m \in \N^*$,  
	and let $W_0:=\partial U_0 \times \D(0,r^*)$. 
	Let $U_i:=p^{m-i}(U_0)$ be a cyclic numbering of the cycle of components 
	containing $U_0$, with $i=0, \dots, m-1$,  so that
	$p(U_{i+1})=U_i$.
	\begin{enumerate}
		\item Assume first that each component in the cycle $U_0, \ldots, U_{m-1}$ contains either zero or two roots of $aX^2+bX+c$.
		Since $W_0$ is homotopic to a curve winding once above $\partial U_0$, by Lemma \ref{lem:pullbackcurve1}, 
		$W_1:= F^{-1}(W_0)  \cap (\partial U_{1} \times \C)$ is homotopic to two disjoint curves, each winding once
		above $\partial U_{1}$. Therefore, $W_1$ is a disjoint union of the interior of two solid tori, each winding once
		above $\partial U_{1}$. Letting $W_n := F^{-n}(W_0) \cap (\partial U_{_n} \times \C)$, we 
		therefore get by induction that $W_n$ is a disjoint union of the interior of $2^n$ solid tori, each winding once
		above $\partial U_n$. Since $W_m \Subset W_0$, we get that $\bigcap_{ n \in m \N} W_n$ is homeomorphic 
		to $S^1 \times \Sigma$. 
		\item Assume now that there exists a component in the cycle containing $U_0$ (we may assume without loss of 
		generality that it is $U_0$ itself) that contains exactly one root of $aX^2+bX+c$.
		We proceed as before, letting $W_0:=\partial U_0 \times \D(0,r^*)$ and $W_n:=F^{-n}(W_0) \cap (\partial U_n \times \C)$. This time, Lemma \ref{lem:pullbackcurve1}  implies that $W_1$ is homotopic to a double winding curve
		above $\partial U_1$. Therefore $W_1$ is the interior of a double winding solid torus. Moreover, by Lemma \ref{lem:pullbackcurve2}, for all $n \geq 1$ the set 
		$W_n$ is the disjoint union of the interior of $2^{n-1}$ solid tori, each winding twice above $\partial U_n$.
		Therefore, for $0 \leq j \leq m-1$, $J_{\partial U_j}= \bigcap_{n \in m \N} W_n$ is homeomorphic to the suspension $S$.
	\end{enumerate}
	
	To conclude
	the proof of Theorem \ref{th:topologyjulia}, notice that if $U,V$ are two Fatou components of $p$
	such that $p(U)=V$, and $J_{\partial V}$ is homeomorphic to either $S^1 \times \Sigma$ or $S$, then 
	Lemmas \ref{lem:pullbackcurve1} and \ref{lem:pullbackcurve2} allow us to determine the topology of $J_{\partial U}$.
	More precisely, letting $s \in \{0,1,2\}$ be the number of roots of $aX^2+bX+c$ contained in $U$, we have
	the following:
	\begin{enumerate}
		\item if $s=0$ or $s=2$, then $J_{\partial U}$ is homeomorphic to $J_{\partial V}$;
		\item if $s=1$, then $J_{\partial U}$ is homeomorphic to $S$. 
	\end{enumerate}
	Since every Fatou  component of $p$ is preperiodic to $U_0$,
	the rest of the proof follows.
\end{proof}

We are now ready to prove the injectivity of $\omega$.

\begin{proof}[Proof of Theorem \ref{teo_classification_components}]
	Let $s_0,s_1 \in \mathcal{S}_p$ such that $\omega(s_0)=\omega(s_1)$: we need to prove that $s_0=s_1$.
	In other terms, let $f_0,f_1 \in \skp$ be in a small enough neighbourhood of $\ptwo_\infty \backslash E$
	in $\P^3$, 
	and belonging to the same component of $\Dd'$; we will prove that for every $U \in \pi_0(\mathring{K_p })$, $s_0(U)=s_1(U)$. 
	
	Recall that by Corollary \ref{cor:isotopy}, if $(f_t)_{t \in [0,1]}$ is an arc in $\mathcal D$, there is an isotopy $h_t : J_2(f_0) \to J_2(f_t)$ of the form $h_t(z,w)=(z, g_t(z,w))$.
	Since $f_0,f_1$
	are in the same connected component of $\mathcal D'$,  they can be joined by such an arc
	and therefore their Julia sets are isotopic in $J_p \times \C$.

	Let $U$ be a bounded Fatou component of $p$ and let $z \in \partial U$, $w \in J_z(f_0)$. 
	By Theorem \ref{th:topologyjulia},
	there exists a unique closed simple curve $C_0$ passing through $f_0(z,w)$ and contained in 
	$J_2(f_0) \cap (\partial V \times \C)$, where $V:=p(U)$. That curve winds either once or twice above $\partial V$. 
	Let $C_1:=h_1(C_0)$ and $\hat C_i:=F_i^{-1}(C_i) \cap (\partial U \times \C)$, where $i \in \{0,1\}$.
	Since the number of connected components of $\hat C_i$ and their linking number 
	in $\partial U \times \C$
	are invariant under isotopy in $\partial U \times \C$, 
	Lemmas \ref{lem:pullbackcurve1} and \ref{lem:pullbackcurve2} imply that $s_0(U)=s_1(U)$. 
	Since this is true for any bounded Fatou component $U$ of $p$, $s_0=s_1$ and the proof is finished.	
\end{proof}

\subsection{Unbounded hyperbolic components in $\mcal$}\label{section_hyp_other}

We have provided in the previous section a complete classification of unbounded components 
of $\dcal$ accumulating on $\P^2_\infty \setminus E$. In this section
 we adapt an interesting example (\cite[Example 9.6]{jonsson1999dynamics})
 to construct unbounded hyperbolic components in $\mcal$.
For the sake of notation, we start setting the following definition, motivated by Corollary
\ref{coro:3bzbounded}.

\begin{defi}
	Let $p$ be a quadratic polynomial.
	Let $z_1, z_2 \in J_p$ (possibly with $z_1=z_2$). We say that a hyperbolic component
	$U \subset \skp$ is \emph{of type $\{z_1,z_2\}$} if for all $z \in J_p$, $G(z,0)=0$ if and only if 
	$z=z_1$ or $z=z_2$. We may write $\{z_1\}$ instead of $\{z_1,z_1\}$.
\end{defi}

The following theorem provides
a basic classification of unbounded hyperbolic components in $\mcal$.
While for components of $\Dd$ we looked for a correspondence
with (pairs of) points in the Fatou set of $p$, for unbounded components of 
$\mcal$
we see that a natural correspondence
exists with (pairs of)
points in the Julia set of $p$.

\begin{teo}
	Let $p$ be a quadratic polynomial and $U \subset \skp$ be an unbounded hyperbolic component
in $\mcal$.
	Then there are $z_1, z_2 \in J_p$ such that $U$ is either of type $\{z_1\}$ or of type 
	$\{z_1,z_2\}$. Moreover, if $U$ is of type $\{z_1\}$ then $z_1$ must be periodic for $p$, 
	and if it is 
	of type $\{z_1,z_2\}$ then either both $z_1$ and $z_2$ are periodic or one is preperiodic to the 
	other.
\end{teo}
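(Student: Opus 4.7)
\emph{The plan} is to first show that the set $S(U):=\{z\in J_p \colon G_\lambda(z,0)=0 \text{ for } \lambda\in U\}$ is well-defined (independent of the choice of $\lambda \in U$) and of cardinality at most $2$, and then establish the periodicity conditions. For well-definedness, fix $z\in J_p$: the set $\bcal_z$ is the zero-locus of the continuous non-negative function $\lambda \mapsto G_\lambda(z,0)$, hence closed, with boundary contained in $\Bif_z\subseteq \Bif(F)$ by Theorem \ref{teo_intro_skew_things}. Since $U$ is a stable component, $\Bif(F)\cap U=\emptyset$, so $\bcal_z\cap U$ is clopen in the connected set $U$, hence either empty or all of $U$. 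The fact that $U\subset \mcal$ implies $\emptyset \subsetneq S(U)\subsetneq J_p$. For the cardinality bound, Lemma \ref{lem:adherenceinclus} places the cluster set of $U$ at $\ptwo_\infty$ inside $\bigcap_{z\in S(U)} E_z$; three distinct such hyperplanes have empty common intersection (as in Corollary \ref{coro:3bzbounded}), so unboundedness of $U$ forces $|S(U)|\leq 2$.

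\emph{The core structural claim} is: every $z\in S(U)$ is pre-periodic for $p$, and moreover if $z$ is pre-periodic to (but not lying in) a cycle $\{x_0,\ldots,x_{k-1}\}\subset J_p$, then some $x_j \in S(U)$. To prove pre-periodicity, fix $\lambda\in U$ and consider the $\omega$-limit $\Omega$ of $(z,0)$ under $F_\lambda$. Hyperbolicity of $F_\lambda$ and Theorem \ref{teo_jonsson_hyperbolic} give $D\cap J(F_\lambda)=\emptyset$, and since $\Omega\subseteq D$, the $\omega$-limit lies in the Fatou set of $F_\lambda$. On the compact $F_\lambda$-invariant set $\Omega$ the family $\{F_\lambda^n\}$ is equicontinuous, whence $\{p^n\}$ is equicontinuous on the first-coordinate projection $\pi_z(\Omega)\subseteq J_p$. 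Combined with uniform expansion of $p$ on its Julia set, this forces $\pi_z(\Omega)$ to be discrete: for any two points $x_1,x_2$ close enough, expansion yields $|p^n(x_1)-p^n(x_2)|\geq C\lambda^n|x_1-x_2|$ for all $n$, while equicontinuity keeps this bounded, so $x_1=x_2$. The compact set $\pi_z(\Omega)=\omega_p(z)$ is therefore finite, and by chain-transitivity reduces to a single periodic orbit; hence $z$ is pre-periodic to it.

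\emph{The fiber argument} then closes the proof. Assume $z$ is pre-periodic but not periodic, with $p^N(z)=x_0$ for minimal $N\geq 1$. The point $w_N:=Q^N_{\lambda,z}(0)$ lies in $K_{x_0}$ by boundedness of the orbit of $(z,0)$, and actually in $\mathrm{int}\,K_{x_0}$ because it belongs to the postcritical set, which is disjoint from $J(F_\lambda)$. The polynomial $Q^k_{\lambda,x_0}$ of degree $2^k$ is hyperbolic (by vertical expansion), so the non-emptiness of $\mathrm{int}\,K_{x_0}$ forces at least one of its critical points to have bounded orbit. By the chain rule, the critical points of $Q^k_{\lambda,x_0}$ are the preimages $(Q^j_{\lambda,x_0})^{-1}(0)$ for $j=0,\ldots,k-1$; the orbit under $Q^k_{\lambda,x_0}$ of such a critical point $w$ agrees, from time $j$, with the orbit of $(x_j,0)$ under $F_\lambda$, so it is bounded iff $x_j\in S(U)$. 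Hence some $x_j\in S(U)$. Now case analysis: if $S(U)=\{z_1\}$ and $z_1$ were not periodic, this produces $x_j\in\{z_1\}$ with $x_j$ in the cycle and $z_1$ not, a contradiction, so $z_1$ is periodic; if $S(U)=\{z_1,z_2\}$ and, say, $z_1$ is not periodic, then necessarily $x_j=z_2$, so $z_2$ is periodic and $z_1$ is pre-periodic to its cycle; otherwise both are periodic. \emph{The main obstacle} is the equicontinuity step pinning $\omega_p(z)$ down to a single periodic cycle: it tightly combines the disjointness of the postcritical set from $J(F_\lambda)$ (hyperbolicity of $F$) with the uniform expansion of $p$ on $J_p$.
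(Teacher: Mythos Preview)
Your first paragraph (well-definedness of $S(U)$ and the cardinality bound via Corollary \ref{coro:3bzbounded}) is correct and matches the paper. The genuine gap is in your equicontinuity argument for pre-periodicity. You write that $D\cap J(F_\lambda)=\emptyset$ implies $\Omega$ lies in the Fatou set of $F_\lambda$. This conflates two different sets: in the paper (as in Jonsson's work), $J(F_\lambda)=J_2=\supp\mu$, whereas the Fatou set is the complement of $J_1=\supp T$, and $J_1\supsetneq J_2$ in general. Concretely, in the very example of Proposition \ref{prop:exjonsson} with $p(z)=z^2-2$, the point $(2,0)$ is a saddle fixed point (eigenvalues $4$ and $0$), so it lies in $J_1\setminus J_2$; here $\Omega=\{(2,0)\}$ is \emph{not} in the Fatou set, and no equicontinuity of $\{F_\lambda^n\}$ is available near it. Your deduction that $\{p^n\}$ is equicontinuous on a neighbourhood of $\pi_z(\Omega)$ would in fact force that neighbourhood into the Fatou set of $p$, contradicting $\pi_z(\Omega)\subset J_p$ --- so the argument cannot be repaired as stated.

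The paper bypasses this entirely by invoking Lemma \ref{lem:visitcritpoint} (from DeMarco--Hruska): for \emph{any} $z\in J_p$ and any component $V$ of $\mathring{K_z}$, some $f^n(V)$ contains a critical point $(p^n(z),0)$. Applying this to the component $V\ni(z,0)$ (note $0\in\mathring{K_z}$ since vertical expansion forbids $0\in J_z$) gives directly that $p^n(z)\in S(U)$ for some $n\geq 1$, with no need to first establish pre-periodicity. Your ``fiber argument'' is essentially a correct re-proof of this lemma in the special case where $z$ is already periodic (via hyperbolicity of the return map $Q^k_{\lambda,x_0}$), but it does not cover the non-periodic case, which is exactly what your broken step 1 was supposed to supply. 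Either cite the general lemma, or extend your fiber argument to non-autonomous iterations over an arbitrary $z\in J_p$ --- which is what the DeMarco--Hruska proof does.
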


\begin{proof}
	By Theorem \ref{teo_new_skew_things},
	for any $f_1, f_2 \in U$ and $z \in J_p$, we 
	have that $(z,0)$ has a bounded orbit for $f_1$ if and only if it has a bounded orbit for $f_2$.
	Since $U$ is unbounded, Corollary \ref{coro:3bzbounded} implies that there are at most 
	two points $z_1, z_2 \in J_p$ such that $(z_i,0)$ has bounded orbit, and since $U$ is a component in 
	$\mcal$ there is at least one $z \in J_p$ such that $(z,0)$ has bounded orbit. Therefore there are 
	$z_1, z_2 \in J_p$ (possibly with $z_1=z_2$) such that $U$ is of type $\{z_1,z_2\}$.
		In order to prove the remaining claims of the theorem, we will use the following lemma.

	\begin{lem}\label{lem:visitcritpoint}
		Let $f$ be a polynomial skew product that is vertically expanding above $J_p$. Let $z \in J_p$ and 
		$V$ be a connected component of $\mathring{K_z}$. There exists $n \in \N^*$ such that 
		$f^n ( \{z\}\times V)$ contains a critical point for $f$.
	\end{lem}

	We refer to \cite[Proposition 3.8]{demarco2008axiom} for
	a proof of this fact. It is stated there
	in the case of an Axiom A polynomial skew product
	but the proof only uses vertical expansion over $J_p$.

	Assume first that $U$ is of type $\{z\}$, and let $V$ be the connected component of 
	$\mathring{K_z}$ containing  $0$.
	 By Lemma \ref{lem:visitcritpoint}, there is $n \in \N^*$
	such that $f^n(\{z\}\times V)$ contains a critical point for $f$. 
	But since all critical points $(y,0)$, $y \in J_p$ 
	escape if $y \neq z$, this means that $f^n(\{0\}\times V)=\{0\}\times V$
	 and $(z,0) \in V$. In particular, we must have
	$p^n(z)=z$.
	Similarly, if $U$ is of type $\{z_1,z_2\}$, let $V_i$ denote the component of $\mathring{K_{z_i}}$
	containing $0$
	 ($1 \leq i \leq 2$). By Lemma \ref{lem:visitcritpoint}, there are 
	$n_1, n_2 \in \N^*$ such that $f^{n_i}(\{0\}\times V_i)$ is either $\{0\}\times V_1$ or $\{0\}\times V_2$. The result 
	follows. 
\end{proof}

We now 
 give examples of all three possibilities of unbounded hyperbolic components in $\mcal$.
We need the following
elementary lemma, following
from Section \ref{section_quadratic}.

\begin{lem}
	Let $z_1, z_2 \in J_p$ with $z_1 \neq z_2$ and assume that $U$ is a hyperbolic unbounded component
	of type $\{ z_1,z_2\}$. Then the cluster of $U$ on $\ptwo_\infty$ is exactly 
	$\{[1, -z_1-z_2, z_1 z_2] \}$.
\end{lem}

The following is an adaptation of \cite[Example 9.6]{jonsson1999dynamics}.

\begin{prop}\label{prop:exjonsson}
	Let $p(z):=z^2-2$, and
	let $g_t(z,w):=(p(z), w^2+t(z+1)(2-z) )$.
	Then for all $t >0$ large enough,
	\begin{enumerate}
		\item $g_t$ is hyperbolic;
		\item for all $z \in J_p \backslash \{-1,2\}$, the critical point $(z,0)$ escapes to infinity;
		\item the critical points $(-1,0)$ and $(2,0)$ are fixed.
	\end{enumerate}
\end{prop}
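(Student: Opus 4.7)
Write $\phi(z) := t(z+1)(2-z)$, so that $g_t(z,w) = (p(z), w^2+\phi(z))$, and note that on $J_p=[-2,2]$ the function $\phi$ vanishes exactly at $\{-1,2\}$. Item (3) is then immediate: $\phi(-1) = \phi(2) = 0$ together with $p(-1)=-1$ and $p(2)=2$ give $g_t(-1,0)=(-1,0)$ and $g_t(2,0)=(2,0)$.

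For item (2), set $z_n := p^n(z)$, so the critical orbit is $(z_n, w_n)$ with $w_0=0$ and $w_{n+1} = w_n^2 + \phi(z_n)$. The two key facts I use are that $|\phi(z)| \asymp 3t\, d(z, \{-1,2\})$ near $\{-1,2\}$ on $J_p$, and that $-1$ and $2$ are repelling fixed points of $p$ with multipliers $-2$ and $4$ respectively. Fix $\delta > 0$ small and independent of $t$. First, for $z \in J_p$ with $d(z,\{-1,2\}) \geq \delta$, I have $|w_1| = |\phi(z)| \geq c\delta t$, which for $t$ large exceeds the quadratic escape radius $R(t) \sim 2\sqrt t$ (chosen so that $R(t)^2 - 4t > 2R(t)$); once $|w_n| > R(t)$, the inequality $|w_{n+1}| \geq |w_n|^2 - 4t \geq 2|w_n|$ forces $w_n \to \infty$. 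Second, for $z$ close to $\{-1,2\}$ but different from these points, write e.g.\ $z=-1+\varepsilon$: linearising $p$ near $-1$ gives $z_n+1 \sim (-2)^n\varepsilon$ while $z_n$ stays in the linearisation neighbourhood, and hence $\phi(z_n) \sim 3t\cdot(-2)^n\varepsilon$; an induction then yields $w_n \sim 3t\varepsilon\cdot(-2)^{n-1}$ throughout this regime without catastrophic cancellation for $t$ large. After $N \asymp \log_2(\delta/|\varepsilon|)$ iterates the orbit $z_N$ exits $B(\{-1,2\},\delta)$ with $|w_N| \gtrsim \delta t \gg R(t)$, reducing to the first case. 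Parameters $z$ strictly preperiodic to $\{-1,2\}$ are subsumed: by the time $z_n$ first hits $\{-1,2\}$, $|w_n|$ is already large and the subsequent recursion $w_{m+1}=w_m^2$ forces escape.

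For item (1), Theorem \ref{teo_jonsson_vertical_expanding} reduces vertical expansion of $g_t$ over $J_p$ to showing $D_{J_p}\cap J_{J_p}=\emptyset$. By (2) and (3), every forward orbit of a critical point $(z,0)$ with $z\in J_p$ is either the fixed orbit $\{(-1,0)\}$ or $\{(2,0)\}$ or escapes to infinity, so $D_{J_p}\cap\C^2\subseteq\{(-1,0),(2,0)\}$. The return maps at $z=-1$ and $z=2$ are both $w\mapsto w^2$, so $J_{-1}=J_2=\{|w|=1\}$ and neither contains $0$; thus $D_{J_p}\cap J_{J_p}=\emptyset$. This is the pertinent form of ``hyperbolicity'' here (note that $p(z)=z^2-2$ is Chebyshev, hence not hyperbolic in the strict sense of Theorem \ref{teo_jonsson_hyperbolic}, so only vertical expansion over $J_p$ is available).

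\textbf{Main obstacle.} The technical core is the escape estimate of step (2) for $z$ close to $\{-1,2\}$. Because the linearised values $\phi(z_n)$ alternate in sign, the quadratic recursion $w_{n+1}=w_n^2+\phi(z_n)$ could a priori accumulate resonant cancellations; the claim is that for $t$ large the linear prediction $w_n \sim 3t\varepsilon\cdot(-2)^{n-1}$ remains accurate throughout the window $|\varepsilon|\,2^n \lesssim \delta$, until the orbit leaves the linearisation neighbourhood. This is essentially Jonsson's computation in \cite[Example 9.6]{jonsson1999dynamics}.
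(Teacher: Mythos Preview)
Your observation that $p(z)=z^2-2$ is Chebyshev and hence not hyperbolic is correct and well taken; the statement should indeed be read as vertical expansion over $J_p$. Item (3) is fine. However, your argument for item (2) has a genuine gap. The linearised estimate $w_n \sim 3t\varepsilon(-2)^{n-1}$ is valid only while the quadratic term $w_n^2$ is negligible compared to $\phi(z_n)\approx 3t(-2)^n\varepsilon$, i.e.\ while $|w_n|=O(1)$, equivalently $2^n|\varepsilon|\lesssim 1/t$. You claim it persists until $2^n|\varepsilon|\sim\delta$ for a \emph{fixed} $\delta$, but for large $t$ this is a much longer window: once $|w_n|\sim 1$ the terms $w_n^2$ and $\phi(z_n)$ are of comparable size, the $\phi(z_n)$ alternate in sign near $z=-1$, and genuine cancellation can and does occur (a direct computation with, say, $3t\varepsilon=0.1$ already shows $w_6$ collapsing to $\approx 0.7$ instead of the predicted $-3.2$). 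Since moreover $\|\phi\|_\infty=4t$ on $J_p$ never exceeds the squared escape radius $\sim 9t$, there is no cheap way to force escape from size considerations alone. You flag this as the main obstacle but do not resolve it; the reference to Jonsson does not help, because his argument, like the paper's, is not a linearisation. Relatedly, your deduction of item (1) from (2) and (3) via ``$D_{J_p}\cap\C^2\subseteq\{(-1,0),(2,0)\}$'' is unjustified: as $z\to -1$ the critical orbits take arbitrarily long to escape, and their closure could a priori meet $J_{J_p}$ elsewhere; a uniform distance estimate is needed.

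The paper's route is essentially different and exploits the real structure of $J_p=[-2,2]$. Since $t>0$ and $z\in\R$, all $w_n$ are real, hence $w_n^2\geq 0$; using the semi-conjugacy of $p$ to the doubling map, the paper shows every $p$-orbit in $J_p\setminus\{-1,2\}$ eventually visits an interval $(-1+\delta,2-\delta)$ where $\phi$ is large and \emph{positive}, so at that moment $w_{n+1}=w_n^2+\phi(z_n)\geq\phi(z_n)\geq 3\sqrt{t}$ with no possible cancellation. Before that visit, a simple box estimate keeps $|w_n|\leq 1/3$. This positivity trick is what bridges the gap your linearisation cannot, and the same box argument supplies the uniformity needed for item (1).
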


\begin{proof}
 	Observe that for all $z \in J_p$ and $t$ large enough, 
 	$R:=3\sqrt{t}$ is an escape radius
	(i.e., $K_z \subset \D(0,3\sqrt{t})$ and $|w|\geq 3\sqrt{t}$ implies that $|Q_z(w)|>|w|\geq 3\sqrt{t}$).
	Set
	$$A_t:=\{ z \in [-2,2] : t (z+1)(2-z) \geq 3 \sqrt{t}\} \subset (-1,2).$$ 
	
	\begin{claim}\label{claim1}
		For $t>0$ large enough, 
		for any $z \in J_p \backslash \{-2,-1,2\}$ there exists $n \geq 0$ such that $p^n(z) \in A_t$.
	\end{claim}
	
	\begin{proof}[Proof of Claim \ref{claim1}]
		Notice
		that $p$ is semi-conjugated on $J_p$ to the doubling map on $\R /\Z$ via the 
		map $\phi: \R /\Z \to J_p$ given by $\phi(x)=2\cos(2\pi x)$. Note that $\phi([0])=2$ and 
		$\phi([\frac{1}{3}])=\phi([\frac{2}{3}])=-1$.
Given 		$0<\epsilon<1/8$, let us set
		$$\tilde A_\eps := \left(\epsilon, \frac{1}{3}-\epsilon\right) \cup \left(\frac{2}{3}+\epsilon,1-\epsilon\right)
		 \subset \R/\Z.$$
We claim that		
		for any $\theta \in \R/\Z \backslash \{ [0],[\frac{1}{3}],[\frac{1}{2}],[\frac{2}{3}] \}$, there exists $n \in \N$ such that
		$2^n \theta \in \tilde A_\eps$.
		
		\begin{enumerate}
\item If $\theta \in [-\epsilon,\epsilon]$ and $\theta \neq 0$
		then $2^n \theta \in \tilde A_\eps$ for some $n$ sufficiently large;
		\item If $\theta \in I_\eps:=[\frac{1}{3}-\epsilon, \frac{2}{3}+\epsilon]$, because of (1), we can assume by contradiction that
		$2^n \theta \in I_\eps \cup \{[0]\}$ for all $n$. This would imply that
		$2^{n+1}\theta$ belongs to $I$ for all $n$, and so necessarily to a
		 small neighbourhood of $\{ \frac{1}{3},\frac{2}{3}\}$. The only possibility
		 is that $\theta \in \{ \frac{1}{3}, \frac{2}{3} \}$, which gives the desired contradiction.
		\end{enumerate}
		
		By conjugating with $\phi$, it follows from the above 
		that, for any $\delta>0$ small enough, 
		 for any $z \in J_p \backslash \{-2,-1,2\}$
		there exists $n \in \N$
		such that $p^n(z) \in (-1+\delta, 2-\delta)$. Since for $t>0$ large enough 
		we have $(-1+\delta, 2-\delta)\subset A_t$, the assertion follows.
	\end{proof}

	\begin{claim}\label{claim2}
	Set $U_\delta:=\{ |\mathrm{Im} (w)| \leq \delta, |\mathrm{Re}(w)| \leq \frac{1}{3} \}$
and $U'_\delta:=\{ |\mathrm{Im} (w)| \leq \delta, |\mathrm{Re}(w)| \leq \frac{1}{4} \}$.
		For all $t>0$ large enough, for any $\delta_0>0$ small enough, there exists $\delta_1,\delta_2<\delta_0$ such that the following hold:
	\begin{enumerate}
			\item 
			$U_{\delta_2}\cap K_z = \emptyset$ for all $z \in J_p$ such that
			$\min ( |z+1|, |z-2|)>\delta_1$;
			\item 
			for all $z \in J_p$ such that $\min (|z+1|, |z-2|)\leq\delta_1$ 
			 we have 
			$q_z(U_{\delta_2}) \subset U'_{\delta_2}$;
			\item for all $z \in J_p \backslash \{-1,2\}$, 
			we have $U_{\delta_2} \cap K_z = \emptyset$.
		\end{enumerate}
	\end{claim}

	\begin{proof}[Proof of Claim \ref{claim2}]
		Let us prove each item separately.
		\begin{enumerate}
			\item Since $K$ is closed, it is enough to prove that for all $z \in J_p \backslash \{-1,2\}$,
$U_{\delta_2}\cap K_z=\emptyset$.		
			 First fix $z \in J_p \backslash \{-1,2\}$ and $w \in \R$. For $t$ large enough, by Claim \ref{claim1}
			 there is some $n \geq 0$ such that 
						$p^n(z) \in A_t$.  Set $w_n:=Q_z^n(w) \in \R$. Then $Q_z^{n+1}(w)=w_n^2+t(z+1)(2-z) \geq t(2-z)(z+1)\geq 3\sqrt{t}$ and therefore
						$f^n(z,w) \notin K$, hence $(z,w) \notin K$, as desired. 	
Let us now take $z=-2$ and $w \in U_{\delta_2}$, so that $|w^2|<\delta^2_2 + 1/9$. In this case, we have
$|f(-2, w)|= |w^2- 4t| \geq 4|t|- \delta^2_2 - 1/9 $, which is larger than the escape radius. The  proof of the first item 
is complete.

			\item Let $t>0$ be large enough for Claim \ref{claim1} to hold, and let $\delta_1, \delta_2>0$ be given by
			the previous item.
Fix $z \in J_p$
as in the statement. Note that 
			 $\min(|z-2|,|z+1|)\leq \delta_1$, 
			implies 
			 $|t(z+1)(z-2)| \leq 4t \delta_1$.
			Taking
$w \in U_{\delta_2}$ and setting $w_1 := q_z (w)$, we have		 				
					\[
			\begin{cases}
			\re(w_{1}) = \re(w)^2-\im(w)^2 + t (2-z)(z+1)\\		
			\im(w_{1})= 2 \im(w) \re(w)
			\end{cases}
			\]
			and therefore
			\[
			\begin{cases}
			|\re(w_{1}) |\leq \frac{1}{9} + \delta_2^2 +4 t \delta_1 < \frac{1}{4}\\
			|\im(w_{1})|\leq 2 \delta_2 \frac{1}{3} \leq \delta_2
			\end{cases}
			\]
			provided that $\delta_1$ and $\delta_2$ are small enough. The assertion follows.
			\item 
			Because of item (1), we only need to consider $z$ such that $0<\min(|z-2|,|z+1|)\leq \delta_1$.
			For any such $z$
			 and $w\in U_{\delta_2}$,
			by means of Claim \ref{claim1} and
			iterating the second item
			we find a smallest $n \geq 1$ such that
	$p^n (z)\in A_t$		
			and $Q^n_z (w)\in U_{\delta_2}$. 
			By the first item, 
			$f^n(z,w) \notin K$; so $(z,w) \notin K$, and the proof is complete.
		\end{enumerate}
	\end{proof}

Let us now return to the proof of Proposition \ref{prop:exjonsson}. 
Item 3 is trivial.
	Item 2  follows immediately from the last item of Claim 
	\ref{claim2}. In order to prove that $g_t$ is indeed hyperbolic, 
	we apply Theorem \ref{teo_jonsson_unique} and prove that the post-critical set does not accumulate on
	the Julia set $J$.
	Since the critical set
	over $J_p$ is given by $[-2,2]\times \{0\}$, it is enough to prove that
	\begin{center}
	for every $z\in [-2,2]$, we have  $d( g_t^n (z,0), J) >\delta_2$ for every $n\geq 0$.
	\end{center}
	where $\delta_2$ is as in Claim \ref{claim2}. We can assume that $\delta_2<\frac{1}{12}$ and that the distance
	between $J$ and $J_p \times \{|w|\geq 3\sqrt t \}$ is also larger than $\delta_2$.

Item 3 of Claim \ref{claim2} and the lower semicontinuity of $z\mapsto J_z$
imply that
$J \cap \pa{[-2,2]\times U_{\delta_2} } =\emptyset$. 	
Thus, the claim is true for $n=0$.	
	Since $(2,0)$ and $(-1,0)$ are fixed, the claim is true for these two points.
Moreover, the claim holds for every $z\in A_t$, since by definition
$q_z (0) = t (z+1)(z-2)\geq 3 \sqrt{t}$.
	
Fix any other $-2\neq z\in J_p$ and
set $(z_n,w_n ) := ( p^n (z), Q_z^n(0))$. 
Notice that $w_n \in \R$.	
	By Claim \ref{claim1}, there exists
$n$ such that $z_n \in A_t$.  By the first item of Claim \ref{claim2}, it
is then enough to prove that $d((z_j, w_j), J)\geq \delta_2$ for
$1\leq j < n$. But the second item of Claim \ref{claim2} implies that
$w_j \in \R \cap U'_{\delta_2}$. Since $J \cap \pa{ [-2,2] \times U_{\delta_2}} = \emptyset$, the assertion follows.

To conclude the proof, we need to consider the orbit of $(-2,0)$. But $|f^n (-2,0)|> 3\sqrt{|t|}$ for all $n\geq 1$,
as proved in the first item
of Claim \ref{claim1}. The proof is complete. 
\end{proof}

	\begin{prop}\label{prop_jonsson_bis}
		Let $p(z)=z^2-2$. There are unbounded hyperbolic components in $\skp$ of type
		$\{ -1,2\}$, $\{2\}$, and $\{-2,2\}$.
	\end{prop}
	
Notice the the component of type $\{ -1,2\}$ corresponds
to the case with two periodic points for $p$, while for the component of type
$\{ -2,2\}$ the point $2$  is periodic and $-2$ is preperiodic to $2$.
	
	\begin{proof}
		According to Proposition \ref{prop:exjonsson}, the maps $g_t$
		are all hyperbolic for $t$ large enough, 
		and since $t \mapsto g_t$ is a continuous, unbounded path in $\skp$,
		they all belong to the same hyperbolic component which is unbounded and 
		of type $\{ -1,2\}$. 
		The existence of components of type $\{2\}$ and $\{-2,2\}$ can be proved
		considering 
		skew products of respective forms 
		$(z,w) \mapsto (z^2-2, w^2+t(2-z))$ and $(z,w) \mapsto (z^2-2, w^2+t(z+2)(2-z))$, and 
		adapting Proposition \ref{prop:exjonsson} to those cases. 
	\end{proof}

\appendix

\section{Equidistribution results in parameter spaces (Theorem \ref{teo_new_equidistribution})}\label{section_equidistribution}

We obtain here a general parametric 
equidistribution
result for families of endomorphisms of $\P^k$, in any dimension $k$, see Theorem \ref{th:convproduit}. We then
describe the
 adapted version for families of polynomial skew products
  that was used in Section \ref{section_current_infinity}, see Theorem \ref{th:equidistribskew}.

\subsection{Equidistributions for endomorphisms of $\pk$}

Let $M$ be a connected complex manifold, and let 
$f\colon M\times \P^k \to \P^k$
be a holomorphic map, defining a holomorphic family $f (\lam,z)= (\lam, f_\lam (z)) $ of endomorphisms of $\pk$.
We assume here the following:

\smallskip

\begin{center}
$\forall n \in \N^* \exists 
\lambda \in M$
 such that for all periodic points of exact period
$n$
for
$f_\lambda:$ 
\end{center}
\begin{equation}	\label{eq_hp}
\det (Df_\lambda^n(z)-\id ) \neq 0.
\end{equation}
Denote by $\jac$ the determinant of the Jacobian matrix and set
$$\widetilde{\per_n^J}=\{ (\lambda, \eta) \in M\times \C: \exists z \in \pk \text{ of exact period } n \text{ for }f_\lambda \text{ and  such that }
\jac f_\lambda^n(z)=\eta  \}.$$
Let $\per_n^J$ be the closure of $\widetilde{\per_n^J}$ in $M \times \C$.
The following result in particular implies that
 $\per_n^J$ is an analytic hypersurface in $M \times \C$.

\begin{prop}\label{prop:nodynatomic}
Let $(f_\lam)_{\lam\in M}$ be a holomorphic family of endomorphisms of $\P^k$ satisfying \eqref{eq_hp}.
	There exists a sequence of holomorphic maps $P_n : M \times \C \rightarrow \C$ such that
	\begin{enumerate}
		\item for all $\lambda \in M$, $P_n(\lambda, \cdot)$ is a monic polynomial of degree $\delta_n \sim \frac{d^{nk}}{n}$;
		\item $P_n(\lambda,\eta)=0$ if and only $(\lambda,\eta) \in \per_n^J$.
	\end{enumerate}
	Moreover, if $(\lambda, \eta) \in{\per_n^J} \backslash  \widetilde{\per_n^J}$, there exists 
	$z \in \pk$ and $m <n$ dividing $n$ such that
	$f_\lambda^m(z)=z$, $\jac (f_\lambda^n)(z)=\eta$, and $1$ is an eigenvalue of $Df_\lambda^n(z)$.
\end{prop}

\begin{proof}

	Let $\Omega_n$ denote the set of $\lambda \in M$ such that periodic cycles of period less than or equal to 
	$n$ 
	do not have 1 as an eigenvalue. By Assumption \eqref{eq_hp},
	 $\Omega_n$ is open and dense in $M$. 
	Let $p_n \colon \Omega_n \times \C \to \C$  be defined by
	\[p_n
	(\lambda,\eta) := \prod_{z \in E_n(\lambda)}( \eta - \jac \, f_\lambda^n(z) )\]
	where $E_n(\lambda)$ denotes the set of periodic points of exact period $n$ for $f_\lambda$.

	By the implicit function theorem and the definition of $\Omega_n$,
	 $p_n$ is holomorphic on $\Omega_n \times \C$.
	Since it is locally bounded, Riemann's extension theorem implies
	that it extends holomorphically to
	 all of $M \times \C$.
	
	Now notice that for all $\lambda \in \Omega_n$, $n$ divides the multiplicity of 
	every root $w$ of the polynomial $p_n(\lambda, \cdot)$. Indeed, if 
	$z \in E_n(\lambda)$ is such that $w = \jac \, f_\lambda^n(z)$, then it is also the case 
	for the other points of the cycle, namely the $f^m(z)$, $0 \leq m \leq n-1$.
	So, for every $\lambda \in \Omega_n$, there is a unique monic polynomial map $P_n(\lambda, \cdot)$ such that 
	$P_n(\lambda, \cdot)^n = p_n(\lambda, \cdot)$. Its degree $\delta_n$ satisfies $\delta_n \sim \frac{\card E_n(\lambda)}{n}$, and by classical computations $\card E_n(\lambda) \sim d^{nk}$. 
	 The map $\lambda \mapsto P_n(\lambda, \cdot)$
	is holomorphic on $\Omega_n$ and locally bounded, hence extends holomorphically to $M$.

	Finally, for all $(\lambda,\eta) \in \Omega_n \times \C$, 
	$P_n(\lambda, \eta)=0$ if and only if  $\lambda \in \tilde{\per_n^J}$.
	If $\lambda \notin \Omega_n$, by considering a sequence
	$(\lambda_i,\eta_i) \in \Omega_n \times \C$ converging to $(\lambda, \eta)$, 
	we find that $P_n(\lambda,\eta)=0$ if and only if $f_\lambda$ has a cycle 
	with Jacobian $\eta$
	whose 
	period divides $n$.
	The drop in period may occur if two points of the cycle collide, creating an eigenvalue 1.
\end{proof}

\begin{defi}
For $\eta \in \C$, we denote by $\per_n^J(\eta)$ the analytic hypersurface of $M$ defined by
$\per_n^J(\eta):=\{\lambda \in M: (\lambda,\eta) \in \per_n^J \}$
and by
$L_n \colon M\times \C \to \C$ 
the function
 $L_n (\lambda,\eta) :=
d^{-nk}   \log \left| P_n(\lambda,\eta) \right| .$
\end{defi}

By the Lelong-Poincaré equation, we have that 
$dd_{\lambda,\eta}^c L_n = d^{-nk} [\per_n^J]$,
where
$[\per_n^J]$
 is the
  current of integration on $\per_n^J$.
Likewise, we have 
$dd_\lambda^c L_n(\cdot,\eta) =
d^{-nk}
 [\per_n^J(\eta)].$

\begin{teo}\label{th:convproduit}
Let $(f_\lam)_{\lam\in M}$ be a holomorphic family of endomorphisms of $\P^k$  and 
	assume that there is at least one parameter
	$\lambda_0\in M$ such that $f_{\lambda_0}$ is Axiom A 
	and $\{ \jac(f_{\lambda_0}^n)(z): f_{\lambda_0}^n(z)=z \text{ and }n \in \N \}$ is not dense in $\C$.
Then $L_n \to L$, the convergence taking place in $\lloc(M \times \C)$.
In particular,	
for any $\eta \in \C$ outside of a polar set, we have 
	$d^{-nk} [\per_n^J(\eta)] \to \tbif.$
\end{teo}

Recall that we denote by
 $L : M \to \R^+$ the sum of the Lyapunov exponents of $f_\lambda$ with respect to its equilibrium measure 
$\mu_\lambda$.
An endomorphism
$f\colon \P^k \to \P^k$ is \emph{Axiom A}
 if periodic points are dense in $\Omega_f$, and $\Omega_f$ is hyperbolic. 
Here $\Omega_f$ denotes the \emph{non-wandering set} of $g$, i.e., 
$$\Omega_f:=\{ z \in \pk : \forall U \text{ neighbourhood of }z, \exists n \in \N^* \text{ s.t. } f^n(U) \cap U \neq \emptyset     \}.$$

A family with an Axiom A parameter satisfies Assumption \eqref{eq_hp}.
Moreover, the
assumptions of Theorem \ref{th:convproduit} are for instance satisfied when
we consider the family of all endomorphisms of $\P^k$ of a given algebraic degree. Thus Theorem \ref{th:convproduit} implies
Theorem \ref{teo_new_equidistribution}.
In order to prove the convergence in Theorem \ref{th:convproduit},
in the spirit of \cite{bassanelli2009lyapunov}
we first study the convergence of the following
 modifications of 
  $L_n$:
\begin{enumerate}
\item	$L_n^+(\lambda,\eta)  = (n d^{nk})^{-1} \sum_{z \in E_n(\lambda)} \log^+ |\eta-\eta_{n}(z,\lambda)|$ 
where 
	$\eta_n(z,\lambda):=\jac f^n_\lambda(z)$
\item $L_n^r(\lambda)  = (2 \pi d^{nk})^{-1} \int_{0}^{2\pi} \log |P_n(\lambda, r e^{it})| dt$.
\end{enumerate}

We will need the following quantitative approximation of $L$
by Berteloot-Dupont-Molino.

\begin{lem}[\cite{berteloot2008normalization}, Lemma 4.5]\label{lem:mostcyclesrepel}
	Let $f$ be an endomorphism of $\pk$ of algebraic degree $d \geq 2$. 
	Let $\epsilon>0$ and let $R_n^\epsilon(f)$ be the set of repelling periodic points $z$ of exact period $n$ for 
	$f$, such that $\left|\frac{1}{n}\log|\jac f^n(z)| - L(f)\right|\leq 2\epsilon$.
	Then for $n$ large enough, $\card R_n^\epsilon(f) \geq d^{nk} (1-\epsilon)^3$.
\end{lem}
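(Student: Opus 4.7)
The plan is to bootstrap Briend--Duverney's equidistribution of repelling periodic points of $f$ against the equilibrium measure $\mu$ to a statement about Lyapunov sums. The main identities used are
\[
\frac{1}{n}\log|\jac f^n(z)| = \int_{\pk}\log|\jac f|\,d\nu_z, \qquad \nu_z := \frac{1}{n}\sum_{j=0}^{n-1}\delta_{f^j z}
\]
valid for any periodic point $z$ of exact period $n$, together with $L(f) = \int\log|\jac f|\,d\mu$ and the fact that $|\jac f^n(z)|$ (hence $\nu_z$) is constant along the orbit of $z$. The appearance of $(1-\epsilon)^3$ in the statement suggests three successive exception counts, each of size at most $\epsilon d^{nk}$: non-repelling periodic points (handled directly by Briend--Duverney, which already produces at least $(1-\epsilon)d^{nk}$ repelling points of exact period $n$), orbits with Lyapunov sum too small, and orbits with Lyapunov sum too large.

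For the two-sided Lyapunov estimate I would truncate the singular function $\log|\jac f|$: set $\psi_A := \max(\log|\jac f|, -A)$, which is continuous on $\pk$. Since $\log|\jac f|\in L^1(\mu)$ (the potential of $\mu$ being Hölder continuous, $\mu$ does not charge $\Cc_f$ and integrates its logarithm), one can choose $A$ so that $0\leq L(f) - \int\psi_A\,d\mu < \epsilon^2$. Because $\int \psi_A\,d\nu_z$ is orbit-invariant, Briend--Duverney applied to the continuous test function $\psi_A$ yields
\[
\frac{1}{d^{nk}}\sum_{z} \int \psi_A\,d\nu_z \xrightarrow[n\to\infty]{} \int\psi_A\,d\mu,
\]
where the sum ranges over repelling periodic points of exact period $n$. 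A Chebyshev estimate then eliminates at most $\epsilon d^{nk}$ orbits for which $\int \psi_A\,d\nu_z$ deviates from $\int\psi_A\,d\mu$ by more than $\epsilon$. The upper bound on $\frac{1}{n}\log|\jac f^n(z)|$ follows at once from this, since $\log|\jac f|$ is bounded above on the compact space $\pk$ and so no truncation loss occurs from above.

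The main obstacle is the lower bound on $\int\log|\jac f|\,d\nu_z$: the truncation loss $\int(\log|\jac f| - \psi_A)\,d\nu_z \leq 0$ can be arbitrarily negative for orbits spending substantial time in a $\delta$-neighbourhood $U_\delta$ of the critical set $\Cc_f$. I would handle this by combining the continuity of the potential of $\mu$, which gives $\mu(U_\delta) \to 0$ and hence $\int_{U_\delta}|\log|\jac f||\,d\mu \to 0$ as $\delta\to 0$, with a quantitative form of Briend--Duverney applied to continuous cutoffs of $U_\delta$, in order to bound the proportion of repelling orbits with $\nu_z(U_\delta)$ not small. Balancing the parameters $A$, $\delta$ and $\epsilon$ removes one final exceptional set of size at most $\epsilon d^{nk}$ and yields the count $(1-\epsilon)^3 d^{nk}$. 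The delicate point is precisely that the equidistribution must accommodate test functions with mild logarithmic singularities; this is where the pluripotential regularity of $\mu$, and specifically the Hölder continuity of its Green potential, enters the argument in an essential way.
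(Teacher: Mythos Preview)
The paper does not supply its own proof of this lemma: it is quoted verbatim from \cite{berteloot2008normalization} and used as a black box. So there is no ``paper's proof'' to compare against; one can only assess whether your sketch would actually establish the result.

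Your outline has the right ingredients but there is a real gap at the step you label a ``Chebyshev estimate''. From Briend--Duval (not Briend--Duverney) you obtain, for any continuous $\psi$,
\[
\frac{1}{d^{nk}}\sum_{z}\int\psi\,d\nu_z \longrightarrow \int\psi\,d\mu,
\]
the sum ranging over repelling $n$-periodic points. This is a statement about the \emph{mean} of the orbit integrals $\int\psi\,d\nu_z$; it says nothing about their \emph{concentration} around that mean. Even knowing the values lie in a bounded interval (say $[0,M]$ for repelling orbits, since $|\jac f^n(z)|>1$), a Markov/Chebyshev bound from the mean alone gives only
\[
\#\Bigl\{z:\tfrac{1}{n}\log|\jac f^n(z)|<L-2\epsilon\Bigr\}\le \frac{M-L}{M-L+2\epsilon}\,d^{nk},
\]
which can be arbitrarily close to $d^{nk}$. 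Exactly the same issue recurs when you try to control the truncation error $\int(\psi_A-\log|\jac f|)\,d\nu_z$: equidistribution against the \emph{unbounded} function $\psi_A-\log|\jac f|$ is not available, and controlling $\nu_z(U_\delta)$ via Markov bounds the \emph{mass} near $C_f$ but not the integral of the logarithmic singularity there, since an individual orbit may pass extremely close to $C_f$.

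What is genuinely needed is a concentration (large-deviation type) statement: for most repelling $n$-cycles the empirical measure $\nu_z$ is close to $\mu$ in a sense strong enough to integrate $\log|\jac f|$. The argument in \cite{berteloot2008normalization} does not proceed via Chebyshev on orbit averages; it goes through the natural extension and uniform estimates on inverse branches, using the H\"older regularity of the Green potential to control contraction rates on a set of large $\mu$-measure, and then transfers this to periodic orbits. Your closing remark that ``pluripotential regularity of $\mu$ enters in an essential way'' is correct, but it enters earlier and more structurally than as a tail estimate on $\mu(U_\delta)$.
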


\begin{lemma}\label{lem:L+bded}
	For all $\eta \in \C$,  $L_n^+(\cdot, \eta)\to L$ pointwise and in $\lloc (M)$.
\end{lemma}

\begin{proof}
	In what follows, the notation $O(\cdot)$ denotes quantities that are bounded by constants depending only on 
	$f_\lambda$ and $\eta$, and not on $n$ or $\epsilon$.
	Fix $\eta \in \C$
	and$\epsilon>0$. 
	We have, for all $n \in \N^*$:
	\begin{equation*}
	|L_n^+(\lambda,\eta)| \leq \frac{\card E_n(\lambda)}{d^{nk}} \sup_{z \in \pk} \| Df_\lambda(z)\|
	\end{equation*}
	which is locally
	bounded from above. Moreover,
	\[
	\begin{aligned}
	L_n^+(\lambda,\eta) = 
	& \frac{1}{nd^{nk}} \left(\sum_{z \in R_n^\epsilon(\lambda)} \log |\eta-\eta_n(z,\lambda)|
	+ \sum_{z \in E_n(\lambda) \setminus R_n^\epsilon(\lambda)} \log^+ |\eta-\eta_{n}(z,\lambda)| \right)   \\
	= &  \frac{1}{nd^{nk}}
	\pa{ \sum_{z \in R_n^\epsilon(\lambda)} \log |\eta_n(z,\lambda)| + O\left((L(\lambda)-2\epsilon)^{-n} \right)}\\
	& +O \left(\frac{\card E_n(\lambda)\setminus R_n^\epsilon(\lambda)}{n d^{nk}} \log(|\eta| + (L(\lam)+2\eps)^n) \right)  
	\end{aligned}\]
	For any $\eps>0$ small enough, $\limn  (L(\lambda)-2\epsilon)^{-n}=0$. By Lemma
 	\ref{lem:mostcyclesrepel}, for $n$ large enough, 
 	$\frac{\card E_n(\lambda)\setminus R_n^\epsilon(\lambda)}{d^{nk}} =O(\eps)$. Hence,
 	 for $n$ large enough,
 	$$L_n^+(\lambda,\eta) = \frac{1}{nd^{nk}} \sum_{z \in R_n^\epsilon(\lambda)} \log |\eta_n(z,\lambda)| + O(\epsilon) = L(\lambda)+O(\eps).$$
	Therefore the sequence of maps $L_n^+$ converges pointwise
	to $(\lambda,\eta) \mapsto L(\lambda)$
	on $M \times \C$.
	Since the $L_n$'s are psh and locally uniformly
	bounded from above, by Hartogs lemma, the convergence also happens in $\lloc$.
\end{proof}

\begin{lemma}\label{lemma_conv_lr}
	For all $r>0$, 
	 $L_n^r\to L$
	 pointwise
	and in $\lloc(M)$.
\end{lemma}

The proof 
 is a straightforward adaptation of that of \cite[Theorem 3.4 (2)]{bassanelli2009lyapunov}.

\begin{proof}[Proof of Theorem \ref{th:convproduit}]
	First, note that the sequence $L_n$ does not converge to $-\infty$.
	Indeed, by assumption there is $\eta_0 \in \C$ and $r>0$ such that no cycle 
	of $f_{\lambda_0}$ has a Jacobian in $\D(\eta_0,r)$. Moreover, since $f_{\lambda_0}$ is Axiom A, 
	its cycles move holomorphically for $\lambda$ near $\lambda_0$, which implies that 
	$(\lambda_0,\eta_0) \notin \overline{\bigcup_{n \in \N^* } \per_n}$. Therefore the sequence 
	$L_n(\lambda_0,\eta_0)$ does not converge to $-\infty$.

	Let $\phi : M \times \C \rightarrow \R$ be a psh function such that a subsequence $L_{n_j}$ converges
	$\lloc$ to $\phi$. Let $(\lambda_0,\eta_0) \in M \times \C$. We have to prove that 
	$\phi(\lambda_0,\eta_0)=L(\lam_0)$.
	
	First, let us prove that $\phi(\lambda_0,\eta_0) \leq L(\lambda_0)$.
	Take $\epsilon>0$ and let $B_\epsilon$ be the ball of radius $\epsilon$ centered at 
	$(\lambda_0,\eta_0)$ in $M \times \C$.
	Using the submean inequality and the $\lloc$ convergence of $L_n^+$, we have
	\begin{equation*}
	\phi(\lambda_0,\eta_0) \leq  \frac{1}{|B_\epsilon|} \int_{B_\epsilon} \phi
	\leq \frac{1}{|B_\epsilon|} \lim_j \int_{B_\epsilon}  L_{n_j}   
	\leq \frac{1}{|B_\epsilon|} \lim_j \int_{B_\epsilon}  L_{n_j}^+   
	\leq \frac{1}{|B_\epsilon|}  \int_{B_\epsilon}  L.
	\end{equation*}
	Then letting $\epsilon \rightarrow 0$, we have that 
	$\phi(\lambda_0,\eta_0) \leq L(\lambda_0)$,
	which gives the desired inequality.
	
	Now let us prove the opposite inequality.
	Assume for now that
	 $\eta_0 \neq 0$.
	Let $r_0 = |\eta_0|$, and let us first notice that
	\begin{equation}\label{eq_limsup}
	\mbox{ for
		almost every } t \in S^1,
	\quad \limsup_j L_{n_j}(\lambda_0, r_0 e^{it})=L(\lambda_0). 
	\end{equation}
	Indeed, for
	any $t \in S^1$ we have
	\begin{equation}\label{eq_limsup2}
	\lim_j L_{n_j}(\lambda_0, r_0 e^{it}) \leq \limsup_j L_{n_j}^+(\lambda_0, r_0 e^{it}) = L(\lambda_0)
	\end{equation}
	and by Fatou's lemma (applied to the functions 
	$t \mapsto -L_{n_j}(\lambda_0, r_0 e^{it})$, which are bounded from below by a constant) and the pointwise
	convergence of $L_n^{r_0}$ we get:
	\[
	\begin{aligned}
	L(\lambda_0) & =   \lim_n L_n^{r_0}(\lambda_0) = 
	\limsup_j \frac{1}{2\pi} \int_{0}^{2\pi} L_{n_j}(\lambda_0, r_0 e^{it})  dt\\
	&	 \leq \frac{1}{2\pi} \int_{0}^{2\pi} \limsup_j L_{n_j}(\lambda_0, r_0 e^{it})  dt 
	\end{aligned}
	\]
	which, together with \eqref{eq_limsup2}, 
	concludes the proof of
	\eqref{eq_limsup}.

	Suppose now to obtain a contradiction that $\phi(\lambda_0,\eta_0) < L(\lambda_0)$.
	Since $L$ is continuous and $\phi$ is upper semi-continuous, there is $\epsilon>0$
	and a neighbourhood $V_0$ of $(\lambda_0,\eta_0)$ such that 
	$\phi(\lambda,\eta) - L(\lambda) < - \epsilon$
	for all $(\lambda,\eta) \in V_0$,
	We may assume without loss of generality that $V_0 = B_0 \times \D(\eta_0,\gamma)$, where
	$B_0$ is a ball containing $\lambda_0$. Hartogs' Lemma then gives
	\begin{equation*}
	\limsup_j  \sup_{V_0} L_{n_j}-L \leq \sup_{V_0} \phi - L \leq - \epsilon.
	\end{equation*}
	But this contradicts \eqref{eq_limsup}.

	Therefore, we have proved that any convergent subsequence of $L_n$ in the $\lloc$ topology of
	$M \times \C$
	must agree with $L$ on $M \times \C^*$. Since $M \times \{0\}$ is negligible, 
	this proves that $L_n$ converges in $\lloc$ to $L$ on $M \times \C$. The proof is complete.
\end{proof}

\subsection{Equidistributions for polynomial skew products}\label{section_equidistribution_skew}

We now explain how to adapt (the proof of)
the general Theorem \ref{th:convproduit}
 to get a natural
  equidistribution statement
for a family $(f_\lambda)_{\lambda \in M}$ of polynomial skew products of $\P^2$.
Since the construction is very similar to the one above, we will omit part of the proofs.
Recall that $Q_{z,\lambda}^n$ is defined by \eqref{eq_for_qn}.

\begin{prop}\label{prop:defpotentielskew}
Let $(f_\lambda)_{\lambda \in M}$ be a holomorphic family of 
polynomial skew products
 of $\ptwo$ 
over a fixed base $p$.
	There exists a sequence of holomorphic maps $P_n^v : M \times \C \rightarrow \C$ such that:
	\begin{enumerate}
		\item For all $\lambda \in M$, $P_n^v(\lambda, \cdot)$ is a monic polynomial 
		\item If $\eta \neq 1$, then $P_n^v(\lambda,\eta)=0$ if and only if 
		there exists $(z,w) \in \C^2$ that is periodic of exact period $n$, and $(Q_z^n)'(w)=\eta$;
		\item If $\eta=1$, then $P_n^v(\lambda,\eta)=0$ if and only if there exists $(z,w) \in \C^2$
		such that $(z,w)$ is periodic of exact period $m$ dividing $n$ for $f_\lambda$, 
		and $(Q_{z,\lambda}^m)'(w)$ is a primitive $\frac{n}{m}$-th root of unity.
	\end{enumerate}
\end{prop}

\begin{proof}
	For any $n \in \N$, let $E_n(p)$ denote the set of periodic points for $p$ of exact period $n$. Let
	$$P_n^v(\lambda,\eta):=\prod_{m|n} \prod_{z \in E_m(p)} P_{z, \frac{n}{m}}(\lambda,\eta)$$
	where $P_{z,\frac{n}{m}}: M \times \C \to \C$ is the map given by \cite[Theorem 2.1]{bassanelli2011distribution}
	for the family of degree $d^m$ polynomials $\{ Q_{z,\lambda}^m : \lambda \in M \}$ with 
	$k:=\frac{n}{m}$. It is straightforward to check that $P_n$ satisfies the required properties.
\end{proof}

\begin{defi}
	For any $\eta \in \C$, we set 
	$\per_n^v(\eta):=\{ \lambda \in M: P_n^v(\lambda,\eta)=0  \}.$
\end{defi}

\begin{teo}\label{th:equidistribskew}
Let $(f_\lambda)_{\lambda \in M}$ be
 a holomorphic family of 
polynomial skew products
 of $\ptwo$ of degree $d\geq 2$
over a fixed base $p$.	
	Assume there exists $\lam_0\in M$ such that $f_{\lam_0}$ is Axiom A.
	For all $\eta \in \C$ outside of a polar subset, we have
	$d^{-kn} [\per_n^v(\eta)] \to \tbif.$
\end{teo}

\begin{proof}
	Set 
	$L_n^v(\lambda,\eta):=
(d^{nk})^{-1}	
	\log \left|P_n^v(\lambda,\eta)\right|.$
	Similarly to the proof of 
	Theorem 
	\ref{th:convproduit},
	in order to prove Theorem \ref{th:equidistribskew}, it is enough to prove 
	that $L_n^v \to L_v$ in $\lloc(M \times \C)$.
	Indeed, in the family $(f_\lambda)_{\lambda \in M}$, the exponent $L_p$
	is constant
	so $\tbif = dd^c L_v$ (see \eqref{eq_lyapunov}).
	Set
	\begin{enumerate}
		\item $L_n^{v,+} (\lambda,\eta) = (nd^{nk})^{-1} \sum_{(z,w) \in E_n(\lambda)} \log^+ |\eta -
		 (Q_{z,\lambda})'(w)|$;
		\item $L_n^{v,r}(\lambda) = (2\pi  d^{nk})^{-1} \int_{0}^{2\pi} \log |P_n^v(\lambda, r e^{it})| dt$.
	\end{enumerate}
	
	The desired convergence of $L_n^v$ follows from the convergences of $L_n^{v,+}$ and $L_n^{v,r}$ to $L^v$.
	The proof of these last two, as well as the deduction of the convergence of $L_n^v$,
	is an adaptation of the methods of the previous section, see Lemmas \ref{lem:L+bded}, \ref{lemma_conv_lr}
	and Theorem  \ref{th:convproduit}
	respectively. Note that if $f_{\lambda_0}$ is an Axiom A skew product, then vertical eigenvalues
	must converge either to $0$ or $\infty$ exponentially fast with respect to the period; therefore in 
	the annulus $\{\frac{1}{2}< |\eta|< 2  \}$, there 
	can be only finitely many vertical eigenvalues for $f_{\lambda_0}$, so that part of the assumption
	in Theorem \ref{th:convproduit} 
	is automatically satisfied.
\end{proof}

\bibliographystyle{alpha}
\bibliography{biblio}

\end{document}